\documentclass{amsart}

\usepackage{amssymb}    
\usepackage{amsmath}    
\usepackage{amsthm}     
\usepackage{amssymb}
\usepackage{mathrsfs}

\usepackage[margin=1in]{geometry}
\numberwithin{equation}{section}

\newtheorem{theorem}{Theorem}[section]
\newtheorem{lemma}[theorem]{Lemma}
\newtheorem{proposition}[theorem]{Proposition}
\newtheorem{corollary}[theorem]{Corollary}

\newtheorem{conjecture}[theorem]{Conjecture}

\newenvironment{prf}[1]{\trivlist
\item[\hskip
\labelsep{\it #1.\hspace*{.3em}}]}{
\endtrivlist}

\newtheorem{predefinition}[theorem]{Definition}

\newtheorem{preremark}[theorem]{Remark}
\newenvironment{remark}{\begin{preremark}\rm}{\end{preremark}}
\newtheorem{prenotation}[theorem]{Notation}
\newenvironment{notation}{\begin{prenotation}\rm}{\end{prenotation}}
\newtheorem{preexample}[theorem]{Example}
\newenvironment{example}{\begin{preexample}\rm}{\end{preexample}}
\newtheorem{preclaim}[theorem]{Claim}

\newtheorem{prequestion}[theorem]{Question}

 \makeatletter
\def\emppsubsection{\@startsection{subsection}{2}{\z@}{-3.25ex plus -1ex minus -.2ex}{-1em}{\bf}}

\makeatother

\newcommand \ZZ {{\mathbb Z}}
\newcommand \QQ {{\mathbb Q}}
\newcommand \NN {{\mathbb N}}
\newcommand  \FF {{\mathbb F}}
\newcommand \GG {{\mathbb G}}
\newcommand \dime {\mathop{\rm dim}}

\newcommand \Hom {\mathop{\rm Hom}}

\newcommand \Hdr {\mathop {H^1_{{\rm dR}}(\mathcal{S}_m)}}

\newcommand \EE {{\mathbb E}}

\usepackage{color}

\pagestyle{plain}

\begin{document}

\title{The de Rham cohomology of the Suzuki curves}
\author{Beth Malmskog}
\address{Beth Malmskog\\
Department of Mathematics and Computer Science\\
Colorado College\\
Colorado Springs, CO 80903, USA}
\email{beth.malmskog@gmail.com}

\author{Rachel Pries}
\address{Rachel Pries, Department of Mathematics, 
Colorado State University, 
Fort Collins, CO 80523, USA}
\email{pries@math.colostate.edu}

\author{Colin Weir}
\address{Colin Weir,
The Tutte Institute for Mathematics and Computing\\
Ottawa, Ontario, Canada}

\email{colinoftheweirs@gmail.com}

\subjclass[2010]{Primary: 11G10, 11G20, 14F40, 14H40, 20C20.  Secondary: 14L15,
20C33}

%
%
%

 \keywords{Suzuki curve, Suzuki group, Ekedahl-Oort type, de Rham cohomology, Dieudonn\'e module, modular representation}
 
\begin{abstract}
For a natural number $m$, let $\mathcal{S}_m/\FF_2$ be the $m$th Suzuki curve.  
We study the mod $2$ Dieudonn\'{e} module of $\mathcal{S}_m$, which gives the equivalent information 
as the Ekedahl-Oort type or the structure of the $2$-torsion group scheme of its Jacobian.  
We accomplish this by studying the de Rham cohomology of $\mathcal{S}_m$.
For all $m$, we determine the structure of the de Rham cohomology as a 
$2$-modular representation of the $m$th Suzuki group
and the structure of a submodule of the mod $2$ Dieudonn\'{e} module.
For $m=1$ and $2$, we determine the complete structure of the mod $2$ Dieudonn\'{e} module.  
\end{abstract}

 \maketitle
\section{Introduction}

The structure of the de Rham cohomology of the Hermitian curves 
as a representation of ${\rm PGU}(3,q)$ was studied in \cite{dum95, dum99, hj90}.
The mod $p$ Dieudonn\'e module and the Ekedahl-Oort type of the 
Hermitian curves were determined in \cite{PW12}.
In this paper, we study the analogous structures for the Suzuki curves.  

For $m \in \NN$, let $q_0=2^m$, and let $q=2^{2m+1}$.
The Suzuki curve $\mathcal{S}_m$ is the smooth projective connected curve over $\FF_2$ given by the affine equation:
\[z^q+z=y^{q_0}(y^q+y).\]
It has genus $g_m=q_0(q-1)$.

The number of points of $\mathcal{S}_m$ over $\FF_q$ is $\#\mathcal{S}_m\left(\FF_q\right)=q^2+1$; 
which is {\it optimal} in that it reaches Serre's improvement to the Hasse-Weil bound 
\cite[Proposition 2.1]{HansenStich}.
In fact, $\mathcal{S}_m$ is the unique $\mathbb{F}_q$-optimal curve of genus $g_m$ \cite{FTunique}.
Because of the large number of rational points relative to their genus, 
the Suzuki curves provide good examples of Goppa codes
\cite{GK08},
\cite{GKT}, \cite{HansenStich}. 

The automorphism group of $\mathcal{S}_m$ is the Suzuki group ${\rm Sz}(q)$, 
whose order $q^2(q-1)(q^2+1)$ is very large compared with $g_m$.
The Suzuki curve $\mathcal{S}_m$ is the Deligne-Lusztig curve associated with the group ${\rm Sz}(q)= {}^2B_2(q)$  \cite[Proposition 4.3]{H92}.

The $L$-polynomial of $\mathcal{S}_m/\FF_q$ is $(1+\sqrt{2q} t +q t^2)^{g_m}$
and so $\mathcal{S}_m$ is supersingular for each $m \in {\mathbb N}$ \cite[Proposition~4.3]{H92}.
This implies that the Jacobian ${\rm Jac}(\mathcal{S}_m)$ is isogenous over $\bar{{\mathbb F}}_2$ to a product of 
supersingular elliptic curves.
In particular, ${\rm Jac}(\mathcal{S}_m)$ has $2$-rank $0$; it
has no points of order $2$ over $\overline{\FF}_2$.  

The $2$-torsion group scheme ${\rm Jac}(\mathcal{S}_m)[2]$ is a ${\rm BT}_1$-group scheme of rank $2^{2g_m}$.
In \cite{FGMPW}, the authors show that the $a$-number of ${\rm Jac}(\mathcal{S}_m)[2]$ is 
$a_m=q_0(q_0+1)(2q_0+1)/6$; in particular, ${\rm lim}_{m \to \infty} a_m/g_m=1/6$.
However, the Ekedahl-Oort type of ${\rm Jac}(\mathcal{S}_m)[2]$ is not known.  
Understanding the Ekedahl-Oort type is equivalent to 
understanding the structure of the de Rham cohomology or the mod $2$ reduction of the Dieudonn\'e module
as a module under the actions of the operators Frobenius $F$ and Verschiebung $V$.

In this paper, we study the de Rham cohomology group $\Hdr$ of the Suzuki curves.
The $2$-modular representations of the Suzuki group are understood from \cite{martineau, ChastofskyFeit, sin, Liu}.
Using results about the cohomology of Deligne-Lusztig varieties from \cite{Lusztig} and \cite{Gross},
we determine the multiplicity of each irreducible $2$-modular 
representation of ${\rm Sz}(q)$ in $\Hdr$ in Corollary~\ref{Theoremrep}.

Let $D_m$ denote the mod $2$ reduction of the Dieudonn\'{e} module of (the Jacobian of) $\mathcal{S}_m$. 
It is an $\EE$-module where $\EE$ is the non-commutative ring generated over $\bar{\FF}_2$ by $F$ and $V$ with the 
relations $FV=VF=0$.
As explained in Section \ref{Sresults}, there is an $\EE$-module
decomposition $D_m=D_{m, 0} \oplus D_{m, \not = 0}$, where
the $\EE$-submodule $D_{m,0}$ is the trivial eigenspace for the action of an automorphism $\tau$ of order $q-1$.

In Proposition \ref{PEOsuztrivial}, we determine the structure of $D_{m,0}$ completely 
by finding that its Ekedahl-Oort type is $[0,1,1,2,2, \ldots, q_0-1,q_0]$.
This yields the following corollary.

\begin{corollary} (Corollary~\ref{maincor})
If $2^m \equiv 2^e \bmod 2^{e+1} +1$, then the ${\mathbb E}$-module ${\mathbb E}/\EE(V^{e+1} + F^{e+1})$
occurs as an ${\mathbb E}$-submodule of the mod 2 Dieudonn\'e module $D_m$ of ${\mathcal S}_m$.
In particular, 
\begin{enumerate}
\item ${\mathbb E}/\EE(V^{m+1} + F^{m+1})$ occurs as an ${\mathbb E}$-submodule of $D_m$ for all $m$;

\item ${\mathbb E}/\EE(V+F)$ occurs as an ${\mathbb E}$-submodule of $D_m$ if $m$ is even; and

\item ${\mathbb E}/\EE(V^2+F^2)$ occurs as an ${\mathbb E}$-submodule of $D_m$ if $m \equiv 1 \bmod 4$.
\end{enumerate}
\end{corollary}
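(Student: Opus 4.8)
The plan is to deduce the whole statement from Proposition~\ref{PEOsuztrivial}, which determines the $\EE$-module $D_{m,0}$ completely through its Ekedahl--Oort type $[0,1,1,2,2,\ldots,q_0-1,q_0]$. Since $D_m = D_{m,0}\oplus D_{m,\neq 0}$ as $\EE$-modules, any $\EE$-module occurring as a direct summand of $D_{m,0}$ occurs a fortiori as an $\EE$-submodule of $D_m$. Thus it suffices to prove that $\EE/\EE(V^{e+1}+F^{e+1})$ is a direct summand of $D_{m,0}$ whenever $2^m\equiv 2^e \pmod{2^{e+1}+1}$, and then to specialize $e$.

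First I would record what the target module looks like. Because $FV=VF=0$ in $\EE$, the quotient $\EE/\EE(V^{e+1}+F^{e+1})$ has $\bar\FF_2$-basis given by the images of $1,F,\ldots,F^{e+1},V,\ldots,V^{e}$ subject to the single relation $V^{e+1}=F^{e+1}$; it is the cyclic, self-dual $\EE$-module of dimension $2(e+1)$ on which $F$ and $V$ act by the evident shifts, with $F^{e+2}=V^{e+2}=0$. So what must be located inside $D_{m,0}$ is a cyclic generator $w$ with $F^{e+1}w=V^{e+1}w\neq 0$ and $F^{e+2}w=V^{e+2}w=0$, generating a summand of this shape.

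The heart of the argument is to convert the staircase type into an explicit cyclic decomposition and to watch the arithmetic of $2^{e+1}+1$ emerge. On the canonical basis underlying Proposition~\ref{PEOsuztrivial}, the semilinear operators $F$ and $V$ act, up to units, by a doubling map on the indexing set, so that $D_{m,0}$ breaks into cyclic $\EE$-modules indexed by the orbits of this map. A balanced self-dual piece $\EE/\EE(V^{e+1}+F^{e+1})$, of dimension $2(e+1)$, arises from an orbit on which $F^{e+1}$ and $V^{e+1}$ agree. Since $2^{e+1}\equiv -1 \pmod{N}$ for $N:=2^{e+1}+1$, the element $2$ has order exactly $2(e+1)$ modulo $N$, and $\{\pm 2^i \bmod N : 0\le i\le e\}$ is a single orbit of the correct size $2(e+1)$; it is precisely on this orbit that the relation $V^{e+1}=F^{e+1}$ closes up. The main obstacle is the remaining bookkeeping: the indexing set of $D_{m,0}$ depends on $q_0=2^m$, and I must show that the distinguished index attached to the curve lands in such an orbit, namely that $2^m\equiv 2^e\pmod{N}$, and that the resulting cyclic piece splits off as a genuine direct summand rather than being absorbed into a longer string. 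Here one also checks that the congruence forces $e\le m$, so that the half-length $e+1\le m+1\le 2^m=q_0$ fits within the height of the staircase, leaving room for the summand.

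Finally, the three displayed cases follow by evaluating the congruence $2^m\equiv 2^e \pmod{2^{e+1}+1}$. Taking $e=m$ makes it automatic, which gives (1) for all $m$. Taking $e=0$ it reads $2^m\equiv 1\pmod 3$; as $2\equiv -1\pmod 3$ this holds iff $m$ is even, giving (2). Taking $e=1$ it reads $2^m\equiv 2\pmod 5$; since the powers of $2$ modulo $5$ cycle as $2,4,3,1$ with period $4$, this holds iff $m\equiv 1\pmod 4$, giving (3).
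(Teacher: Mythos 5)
Your high-level route is the same as the paper's: reduce Corollary~\ref{maincor} to the trivial eigenspace $D_{m,0}$, which is an $\EE$-direct summand of $D_m$ by the decomposition $D_m = D_{m,0}\oplus D_{m,\neq 0}$, and then locate $\EE/\EE(V^{e+1}+F^{e+1})$ inside $D_{m,0}$ using the explicit Elkin--Pries description of the Dieudonn\'e module of a $2$-rank $0$ hyperelliptic curve. Your description of the target module and your evaluation of the congruence in the three special cases ($e=m$ automatic; $e=0$ giving $2^m\equiv 1\bmod 3$ iff $m$ even; $e=1$ giving $2^m\equiv 2\bmod 5$ iff $m\equiv 1\bmod 4$) are all correct and agree with the paper.

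However, the step you defer as ``the remaining bookkeeping'' is a genuine gap: it is precisely the content of the paper's Proposition~\ref{W0structure}, which is the mathematical core of the corollary, and your orbit heuristic does not supply it. Knowing that $2$ has order $2(e+1)$ modulo $N=2^{e+1}+1$, so that $\{\pm 2^i \bmod N\}$ is a single orbit of size $2(e+1)$, says nothing yet about the actual index set $I\subset\{1,\ldots,q_0\}$ on which $F$ and $V$ act; the abstract orbit must be matched to a concrete generator, and that is where the hypothesis $2^m\equiv 2^e \bmod N$ has to enter. Concretely, using Propositions~\ref{Pdieu} and~\ref{Palg} (which come from \cite{elkinpries}, not from the Ekedahl--Oort type of Proposition~\ref{PEOsuztrivial} alone --- extracting the doubling description from the type would itself require those results), the relation $(V^{e+1}+F^{e+1})X_j=0$ forces $j=2^e\ell$ with $\ell$ odd and then $s(j)=2^e m(j)$, i.e., $q_0-(\ell-1)/2=2^e(2q_0-2j+1)$; solving gives $j=(2^{e+1}q_0+2^e)/(2^{e+1}+1)$, which lies in $\ZZ$ if and only if $2^m\equiv 2^e \bmod 2^{e+1}+1$. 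In your write-up this computation never happens: the congruence is restated as the thing to be shown rather than used, and the claim that the relation ``closes up precisely on this orbit'' (and yields a summand rather than being absorbed into a longer string) is asserted, not proved. Carrying out the displayed index computation --- which also shows $\iota(j)=j$ so that $\EE X_j$ splits off among the decoupled relations of Proposition~\ref{Palg} --- is exactly what is missing, and it is how the paper completes the argument.
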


We have less information about $D_{m, \not = 0}$, the sum of the non-trivial eigenspaces for $\tau$.
In Section \ref{Snontrivial}, we explain a connection between the Ekedahl-Oort type and irreducible subrepresentations of $\Hdr$.
This motivates Conjecture \ref{Ceasyword}, in which we conjecture that the $\EE$-module 
$\EE/\EE(V^{2m+1} + F^{2m+1})$ occurs with multiplicity $4^m$ in $D_m$.

We determine the complete structure of the mod 2 Dieudonn\'{e} module 
$D_m$ for $m=1$ and $m=2$ in Propositions \ref{m=1}-\ref{m=2}.
To do this, we explicitly compute a basis for $\Hdr$ for all $m \in \NN$ in Section ~\ref{Sexplicitbasis}
and, for $m=1,2$, we compute the actions of $F$ and $V$ on this basis. 

There is a similar result in \cite{duursmaskabelund} for the first Ree curve, which is defined over $\FF_3$, namely the authors determine its mod $3$ Dieudonn\'{e} module.

Malmskog was partially supported by NSA grant H98230-16-1-0300.
Pries was partially supported by NSF grant DMS-15-02227. 
We would like to thank Jeff Achter for helpful comments.

\subsection{Notation}  \label{Snotation}

We begin by establishing some notation regarding $p$-torsion group schemes, 
mod $p$ Dieudonn\'e modules, and Ekedahl-Oort types,
taken directly from \cite[Section 2]{PW12}.

Let $k$ be an algebraically closed field of characteristic $p >0$. 
Suppose $A$ is a principally polarized abelian variety of dimension $g$ defined over $k$.
Consider the multiplication-by-$p$ morphism $[p]:A \to A$ which is a finite flat morphism of degree $p^{2g}$.
It factors as $[p]=V \circ F$.  Here $F:A \to A^{(p)}$ is the relative Frobenius morphism 
coming from the $p$-power map on the structure sheaf; it is purely inseparable of degree $p^g$.  
The Verschiebung morphism  $V:A^{(p)} \to A$ is the dual of $F_{A^{\rm dual}}$.  

The {\it $p$-torsion group scheme} of $A$, denoted $A[p]$, is the kernel of $[p]$.  
It is a finite commutative group scheme annihilated by $p$, again having morphisms $F$ and $V$, 
with ${\rm Ker}(F)={\rm Im}(V)$ and ${\rm Ker}(V)={\rm Im}(F)$.
The principal polarization of $A$ induces a symmetry on $A[p]$ as defined in \cite[5.1]{O:strat};
when $p = 2$, there are complications with the polarization which are resolved in \cite[9.2, 9.5, 12.2]{O:strat}.

There are two important invariants of (the $p$-torsion of) $A$: the $p$-rank and $a$-number.
The {\it $p$-rank} of $A$ is $f=\dime_{\FF_p} \Hom(\mu_p, A[p])$
where $\mu_p$ is the kernel of Frobenius on $\GG_m$.
Then $p^f$ is the cardinality of $A[p](k)$.
The {\it $a$-number} of $A$ is $a=\dime_k \Hom(\alpha_p, A[p])$ 
where $\alpha_p$ is the kernel of Frobenius on $\GG_a$.

One can describe the group scheme $A[p]$ using 
the {\it mod $p$ Dieudonn\'e module}, i.e., the modulo $p$ reduction of the 
covariant Dieudonn\'e module, see e.g., \cite[15.3]{O:strat}.
More precisely, there is an equivalence of categories 
between finite commutative group schemes over $k$ annihilated by $p$ 
and left $\EE$-modules of finite dimension.
Here ${\mathbb E} = k[F, V ]$ denotes the non-commutative ring generated by 
semi-linear operators $F$ and $V$ with the relations $F V = V F = 0$ and $F \lambda = \lambda^pF$ and $\lambda V = V \lambda^p$ for all $\lambda \in k$. 
Let ${\mathbb E}(A_1, \ldots)$ denote the left ideal of ${\mathbb E}$ generated by $A_1, \ldots$.

Furthermore, there is a bijection between isomorphism classes of $2g$ dimensional left $\EE$-modules and {\it Ekedahl-Oort types}. 
To find the Ekedahl-Oort type, let $N$ be the mod $p$ Dieudonn\'e module of $A[p]$.  
The canonical filtration of $N$
is the smallest filtration of $N$ stabilized by the action of $F^{-1}$ and $V$; denote it by 
\[0 = N_0 \subset N_1 \subset \cdots N_z=N.\]
The canonical filtration can be extended to a final filtration; the Ekedahl-Oort type is the tuple
$[\nu_1, \ldots, \nu_g]$, where the $\nu_i$ are the dimensions of the images of $V$
on the subspaces in the final filtration.

For example,  
let $I_{t,1}$ denote the $p$-torsion group scheme of rank $p^{2t}$ having $p$-rank $0$ and $a$-number $1$. 
Then $I_{t,1}$ has Dieudonn\'e module ${\mathbb E}/{\mathbb E}(F^{t} + V^{t})$ 
and Ekedahl-Oort type $[0,1, \ldots, t-1]$ \cite[Lemma 3.1]{Prgroupscheme}.

For a smooth projective curve $X$, by \cite[Section 5]{Oda69}, there is an isomorphism
of ${\mathbb E}$-modules between the contravariant mod $p$ Dieudonn\'e module 
of the $p$-torsion group scheme
${\rm Jac}(X)[p]$ and the de Rham cohomology $H^1_{\rm dR}(X)$.\footnote{ 
Differences between the covariant and contravariant theory do not cause a problem in this paper since all objects we
consider are symmetric.}

In the rest of the paper, $p=2$.  

\section{The de Rham cohomology as a representation for the Suzuki group}

In this section, we analyze the de Rham cohomology $\Hdr$ of the Suzuki curve
as a $2$-modular representation of the Suzuki group.

\subsection{Some ordinary representations}

Suzuki determined the irreducible ordinary characters and representations of ${\rm Sz}(q)$ \cite{Suzuki}.  
Consider the following four unipotent representations of ${\rm Sz}(q)$.  
Let $W_S$ denote the Steinberg representation of dimension $q^2$.
Let $W_0$ be the trivial representation of dimension 1.  
Let $W_+$ and $W_-$ be the two unipotent cuspidal representations of ${\rm Sz}(q)$,
associated to the two ordinary characters of ${\rm Sz}(q)$ of degree $q_0(q-1)$ \cite{Suzuki}. 
Then $W_+$ and $W_-$ each have dimension $q_0(q-1)$.  

In \cite[Theorem 6.1]{Lusztig}, Lusztig studied the compactly supported $\ell$-adic cohomology of the 
affine Deligne-Lusztig curves.
For the Suzuki curves, he proved that the ordinary representations 
$W_S$, $W_+$, $W_-$, $W_0$ are the eigenspaces under Frobenius
and that each appears with multiplicity 1.  

\subsection{Modular representations of the Suzuki group}

The absolutely irreducible $2$-modular representations of ${\rm Sz}(q)$ are well-understood 
\cite{martineau, ChastofskyFeit, sin, Liu}.

Let $q=2^{2m+1}$.
We recall some results about the $2$-modular representations of the Suzuki group ${\rm Sz}(q)$ from \cite{martineau}.
Fix a generator $\zeta$ of $\mathbb{F}^*_q$.
Let $\theta \in {\rm Aut}(\FF_q)$ be such that $\theta^2(\alpha)=\alpha^2$ for all $\alpha \in \FF_q$, i.e., 
$\theta$ is the square root of Frobenius.

The Suzuki group acts on $\mathcal{S}_m$.
Let $\tau \in \text{Sz}(q)$ be an element of order $q-1$; without loss of generality, 
we suppose that $\tau$ acts on $\mathcal{S}_m$ by
\[\tau : y \mapsto \zeta y,  \ z \mapsto \zeta^{2^m+1}z.\]

Then $\text{Sz}(q)$ has an irreducible 4-dimensional 2-modular representation $V_0$ in which $\tau\mapsto M$, where $M\in {\rm GL}_4(\mathbb{F}_q)$ is the matrix
\[M=\left(\begin{array}{cccc}\zeta^{\theta+1} & 0 & 0 & 0 \\0 & \zeta & 0 & 0 \\0 & 0 & \zeta^{-1} & 0 \\0 & 0 & 0 & \zeta^{-(\theta+1)}\end{array}\right).\]

%

For $0 \leq i \leq 2m$, consider the automorphism $\alpha_i$ of ${\rm Sz}(q)$ 
induced by the automorphism $x \mapsto x^{2^i}$ of $\FF_q$.
Let $V_i$ be the $4$-dimensional $\FF_q {\rm Sz}(q)$-module where $g \in {\rm Sz}(q)$ acts as $g^{\alpha_i}$ on $V_0$.

Let $I$ be a subset of $N=\ZZ/(2m+1)\ZZ$.  Define $V_I=\otimes_{j \in I} V_j$, with $V_\emptyset$ being the trivial module.
Then $V_I$ is an absolutely irreducible $2$-modular representation of ${\rm Sz}(q)$.
By \cite[Lemma 1]{martineau}, if $I \neq J$ then $V_I$ and $V_J$ are geometrically non-isomorphic and 
$\{V_I \mid I \subset N\}$ is the complete set of simple $\overline{\FF}_2 {\rm Sz}(q)$-modules.
Note that $V_I$ has dimension $4^{|I|}$ and that $V_N$ is the Steinberg module.  

By \cite[Theorem, page 1]{sin}, for $I, J \subset N$, there are no non-trivial extensions of $V_I$ by $V_J$, 
namely ${\rm Ext}^1_{\bar{\FF}_2 {\rm Sz}(q)}(V_I, V_J)=0$.

The Frobenius $x \mapsto x^2$ on $\FF_q$ acts on $\{V_i \}$ taking $V_i \mapsto V_{i+1 \bmod 2m+1}$.
Note that $\oplus_{I \in {\mathcal I}} V_I$ is an $\FF_2 {\rm Sz}(q)$-module if and only if ${\mathcal I}$ is invariant under Frobenius or, equivalently, if and only if 
$\{I \mid I \in {\mathcal I}\}$ is invariant under the translation $i \mapsto i+1 \bmod 2m+1$.

For $i\in N$, let $\phi_i$ denote the Brauer character associated to the $4$-dimensional module $V_i$.  For $I\subseteq N$, let $\phi_I=\prod_{i\in I} \phi_i$, so $\phi_I$ is the character associated to the module $V_I$.  Then $\{\phi_I:I\subseteq N\}$ is a complete set of Brauer characters for ${\rm Sz}(q)$.  

By \cite[Theorem 3.4]{ChastofskyFeit}, $\phi_i^2=4+2\phi_{i+m+1}+\phi_{i+1}$.
Using this relation, 
Liu constructs a graph with vertex set $N$ and edge set $\{(i,i+1), (i,i+1+m):i\in N\}$.  Edges of the form $(i,i+1)$ are called short edges and edges of the form $(i, i+1+m)$ are called long edges.  Two vertices $i,j$ are called adjacent if they are connected by a long edge, i.e., if $i-j\equiv\pm m\bmod{2m+1}$.  A set $I^{\prime}\subseteq N$ is called circular if no vertices of $I=N\setminus{I^{\prime}}$ are adjacent.  A set $I\subseteq S$ is called good if $I^{\prime}=N\setminus{I}$ is circular.

The decompositions of $W_+$ and $W_-$ into irreducible $2$-modular representations are known. 

\begin{theorem} \label{TLiu}
Liu \cite[Theorem 3.4]{Liu} 
The irreducible $2$-modular representation $V_I$ appears in $W_{\pm}$ if and only if $I$ is good, i.e., 
if and only if there do not exist $i, j \in I$ such that $j-i\equiv \pm m \bmod 2m+1$.  
In this case, the multiplicity of $V_I$ in $W_\pm$ is $2^{m-|I|}$.
\end{theorem}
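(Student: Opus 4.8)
The plan is to compute decomposition numbers through Brauer characters. Writing $\hat\chi$ for the restriction of an ordinary character $\chi$ to the $2$-regular (odd order) classes, the multiplicity of $V_I$ in $W_\pm$ is the coefficient $d_I$ in the unique expansion $\hat W_\pm=\sum_{I\subseteq N}d_I\,\phi_I$, because $\{\phi_I\}$ is a basis for the class functions on $2$-regular classes. Thus the entire theorem is the single identity
\[\hat W_\pm=\sum_{I\text{ good}} 2^{m-|I|}\phi_I,\]
to be verified as an equality of functions on the $2$-regular classes. The first reduction I would make is to note, from Suzuki's table \cite{Suzuki}, that $W_+$ and $W_-$ differ only on the $2$-singular unipotent classes (the elements of order $4$), so $\hat W_+=\hat W_-$; this is exactly what forces the two decompositions to coincide, and it lets me treat $W:=W_+$ alone.

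Before checking classes, I would record the combinatorial picture. Since $m+1\equiv -m\bmod 2m+1$ and $\gcd(m+1,2m+1)=1$, the long edges $\{(i,i+1+m)\}$ form a single $(2m+1)$-cycle on $N$, and ``$I$ good'' means precisely that $I$ is an independent set of this cycle $C_{2m+1}$. As a warm-up and a check on the multiplicities, I would verify the identity at the identity element: using $\dim V_I=4^{|I|}$,
\[\sum_{I\text{ good}} 2^{m-|I|}\dim V_I = 2^m\sum_{I\text{ good}}2^{|I|}=2^m\,\mathcal P_{C_{2m+1}}(2),\]
where $\mathcal P_{C_{2m+1}}$ is the independence polynomial of the cycle. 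The path recursion $\mathcal P_{P_n}(x)=\mathcal P_{P_{n-1}}(x)+x\,\mathcal P_{P_{n-2}}(x)$ yields $\mathcal P_{C_n}(2)=2^n+(-1)^n$, so the sum equals $2^m(2^{2m+1}-1)=q_0(q-1)=\dim W$, confirming the degree.

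For the nontrivial $2$-regular classes I would use the explicit $\tau$-action. On the torus $\langle\tau\rangle$ of order $q-1$ the module $V_0$ is diagonal with eigenvalues $\zeta^{\pm(\theta+1)}$ and $\zeta^{\pm1}$, and the Frobenius twist gives $\phi_i(\tau^k)=\phi_0(\tau^{2^ik})$; analogous eigenvalue descriptions hold on the nonsplit tori of orders $q\pm q_0+1$. Hence $\phi_I(\tau^k)=\prod_{i\in I}\phi_i(\tau^k)$ is explicit, and
\[\sum_{I\text{ good}} 2^{m-|I|}\phi_I=2^m\sum_{I\text{ independent in }C_{2m+1}}\ \prod_{i\in I}\frac{\phi_i}{2}\]
is the multivariate independence polynomial of $C_{2m+1}$ with vertex weights $\phi_i/2$. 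Relabelling the cycle by the generator $m+1$, this is a transfer-matrix trace $\mathrm{tr}\prod_j\left(\begin{smallmatrix}1 & \phi_{\pi(j)}/2\\ 1 & 0\end{smallmatrix}\right)$, which I would evaluate in closed form and match against Suzuki's tabulated value of $W$ on that class.

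The main obstacle is precisely this last matching on the torus classes: turning the transfer-matrix trace (equivalently the weighted sum over independent sets of $C_{2m+1}$) into the root-of-unity expression in Suzuki's table. The Chastofsky--Feit relation $\phi_i^2=4+2\phi_{i+m+1}+\phi_{i+1}$ is the tool that collapses the products that arise, and the Frobenius symmetry $\phi_{i+1}(\tau)=\phi_i(\tau^2)$ organizes the bookkeeping; the arithmetic is most delicate on the nonsplit tori of order $q\pm q_0+1$, where the eigenvalues are genuine $\FF_{q^2}$-quantities rather than powers of $\zeta$. Once the identity is confirmed on every $2$-regular class, the uniqueness of the Brauer decomposition delivers both the criterion (``$V_I$ occurs if and only if $I$ is good'') and the multiplicity $2^{m-|I|}$ simultaneously.
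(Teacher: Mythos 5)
Your framework is sound, and several of your reductions are correct and genuinely nice: the multiplicity of $V_I$ in $W_\pm$ is indeed the coefficient of $\phi_I$ in the expansion of $\hat W_\pm$ over the irreducible Brauer characters (and the count is consistent: $\mathrm{Sz}(q)$ has exactly $q=2^{2m+1}$ odd-order classes, one for each subset $I\subseteq N$, so $\{\phi_I\}$ is a basis of class functions on $2$-regular classes); the two degree-$q_0(q-1)$ characters agree off the order-$4$ classes, so $W_+$ and $W_-$ have identical decomposition numbers; ``good'' does mean independent in the $(2m+1)$-cycle formed by the long edges, since $\gcd(m+1,2m+1)=1$; and the degree check $2^m\mathcal P_{C_{2m+1}}(2)=2^m(2^{2m+1}-1)=q_0(q-1)$ is correct. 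For context, the paper itself offers no proof of this statement at all: it is quoted from Liu \cite{Liu}, whose argument (building on the projective-character computations of Chastkofsky--Feit \cite{ChastofskyFeit}) is a substantial piece of modular representation theory rather than a character-table verification.

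The genuine gap is exactly the step you defer. A proof along your lines must establish the identity $\hat W_\pm=\sum_{I\ \mathrm{good}}2^{m-|I|}\phi_I$ on every one of the $q-1$ nontrivial $2$-regular classes, i.e.\ on representatives of the three families of cyclic maximal tori, of orders $q-1$, $q+2q_0+1$, and $q-2q_0+1$ (note: not $q\pm q_0+1$ as you wrote). You set up the weighted independent-set sum as a transfer-matrix trace and then say you ``would evaluate it in closed form and match against Suzuki's tabulated value'' --- but that evaluation \emph{is} the theorem; everything else in your write-up is bookkeeping plus a consistency check at the identity element. There is no reason to expect this to be formal: the matrices $\left(\begin{smallmatrix}1 & \phi_{\pi(j)}/2\\ 1 & 0\end{smallmatrix}\right)$ for different $j$ do not commute, the values $\phi_i(\sigma)$ on a fixed class are sums of four roots of unity that vary with $i$ in an arithmetically nontrivial way (on the nonsplit tori they arise from eigenvalues living over $\mathbb{F}_{q^4}$), and the Chastkofsky--Feit relation $\phi_i^2=4+2\phi_{i+m+1}+\phi_{i+1}$ only controls squares of single factors, not the mixed products appearing in the trace. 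Until that identity is proved uniformly in $m$ and on all three torus families, what you have is a plausible strategy --- essentially a proposal to re-derive Liu's decomposition numbers by direct character computation --- and not a proof.
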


\subsection{Modular representation of the de Rham cohomology}

The de Rham cohomology $H^1_{{\rm dR}}(\mathcal{S}_m)$ is an $\mathbb{F}_2[{\rm Sz}(q)]$-module
of dimension $2g_m=2q_0(q-1)$.  
We consider the decomposition of $H^1_{{\rm dR}}(\mathcal{S}_m)$
into irreducible 2-modular representations of the Suzuki group ${\rm Sz}(q)$.  

\begin{corollary}\label{Theoremrep}
The irreducible $2$-modular representation $V_I$ appears in $\Hdr$ if and only if there do not exist $i, j \in I$ such that $j-i\equiv \pm m \bmod 2m+1$.
If $V_I$ appears in $\Hdr$ then its multiplicity is $2^{m+1-|I|}$.
Thus the $2$-modular ${\rm Sz}(q)$-representation of $\Hdr$ is:
\begin{equation}\label{repforHdr}
\Hdr \simeq \bigoplus_{I \ {\rm good}} V_I^{2^{m+1-|I|}}.
\end{equation}
\end{corollary}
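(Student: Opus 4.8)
The plan is to compute the Brauer character of $\Hdr$ and then promote it to an isomorphism of modules using the absence of extensions. The decisive structural fact is Sin's vanishing \cite[Theorem, page 1]{sin}: because ${\rm Ext}^1_{\overline{\FF}_2 {\rm Sz}(q)}(V_I, V_J)=0$ for all $I,J\subseteq N$, every finite-dimensional $\overline{\FF}_2{\rm Sz}(q)$-module is semisimple, so $\Hdr$ is determined up to isomorphism by its composition multiplicities $[\Hdr:V_I]$. It therefore suffices to show that the Brauer character of $\Hdr$ equals $\sum_{I\ {\rm good}} 2^{m+1-|I|}\phi_I$.

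To access this Brauer character I would first determine the ordinary $\ell$-adic cohomology $H^1_{et}(\mathcal{S}_m,\overline{\QQ}_\ell)$. By Lusztig's analysis of the cohomology of this Deligne--Lusztig curve \cite[Theorem 6.1]{Lusztig}, the interior (middle) cohomology of $\mathcal{S}_m$ is built from the two cuspidal unipotent representations $W_+$ and $W_-$, the Steinberg $W_S$ and the trivial $W_0$ entering only in degrees $0$ and $2$ or through the boundary of the associated affine variety. Since $\dim W_+ + \dim W_- = 2q_0(q-1) = 2g_m = \dim H^1_{et}(\mathcal{S}_m,\overline{\QQ}_\ell)$, the dimension count forces $H^1_{et}(\mathcal{S}_m,\overline{\QQ}_\ell)\cong W_+\oplus W_-$ as ordinary ${\rm Sz}(q)$-representations.

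The crux is to transport this from the ordinary $\ell$-adic setting to the mod $2$ de Rham Brauer character. For a fixed $g\in{\rm Sz}(q)$ of odd order I would compare equivariant Euler characteristics through the Lefschetz trace formula. The crystalline cohomology $H^i_{\rm crys}(\mathcal{S}_m/W)[1/2]$ over the Witt vectors $W=W(\overline{\FF}_2)$ is a Weil cohomology reducing to $\Hdr$, and for a prime-to-$2$ automorphism its Lefschetz number equals $\#{\rm Fix}(g)$, exactly as in the $\ell$-adic theory; since $g$ has finite order prime to $2$, reducing the (root-of-unity) traces modulo $2$ identifies ${\rm tr}(g\mid H^i_{\rm crys}[1/2])$ with the Brauer character value ${\rm tr}_{\rm Br}(g\mid H^i_{dR})$. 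Hence
\[ \sum_i (-1)^i {\rm tr}_{\rm Br}\!\left(g\mid H^i_{dR}(\mathcal{S}_m)\right) = \#{\rm Fix}(g) = \sum_i (-1)^i {\rm tr}\!\left(g\mid H^i_{et}(\mathcal{S}_m,\overline{\QQ}_\ell)\right). \]
Because $H^0$ and $H^2$ are the trivial representation in both theories, they cancel, leaving ${\rm tr}_{\rm Br}(g\mid\Hdr)={\rm tr}(g\mid H^1_{et})$ for every $2$-regular $g$; that is, the Brauer character of $\Hdr$ is that of the reduction modulo $2$ of $W_+\oplus W_-$. This de Rham/\'etale comparison is where I expect to rely on the Deligne--Lusztig input of \cite{Gross} alongside \cite{Lusztig}, and it is the step demanding the most care (in particular the handling of any crystalline torsion).

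Finally I would invoke Liu's decomposition, Theorem~\ref{TLiu}: the reduction modulo $2$ of each of $W_+$ and $W_-$ contains $V_I$ with multiplicity $2^{m-|I|}$ exactly when $I$ is good, and with multiplicity $0$ otherwise. Adding the two contributions gives $[\Hdr:V_I]=2\cdot 2^{m-|I|}=2^{m+1-|I|}$ for good $I$ and $0$ for non-good $I$, matching the desired Brauer character. Combining this equality of Brauer characters with the semisimplicity supplied by Sin's theorem yields the module isomorphism $\Hdr\simeq\bigoplus_{I\ {\rm good}}V_I^{2^{m+1-|I|}}$. The only genuinely delicate point is the Lefschetz comparison of the previous paragraph; granting it, the combinatorics of Theorem~\ref{TLiu} and the vanishing of extensions finish the argument without further difficulty.
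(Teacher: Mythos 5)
Your proposal is correct, and its overall architecture is the same as the paper's: identify $H^1(\mathcal{S}_{m,\bar{\FF}_2},\bar{\QQ}_\ell)\simeq W_+\oplus W_-$ via Lusztig and Gross, transfer this to crystalline cohomology, reduce modulo $2$ to reach $\Hdr$, and finish with Liu's Theorem~\ref{TLiu}. The genuine difference is in how you justify the transfer step: the paper simply cites Katz--Messing \cite[Theorem 2]{katzmessing} for the fact that the ${\rm Sz}(q)$-characters of $\ell$-adic and crystalline cohomology coincide (hence the representations are isomorphic), whereas you re-derive that comparison by hand, via an equivariant Lefschetz fixed-point argument for $2$-regular $g$ together with the cancellation of $H^0$ and $H^2$, and then pass to Brauer characters. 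Your route is more self-contained but also the most delicate part of your write-up, as you acknowledge: you must justify the fixed-point formula in the crystalline theory and the torsion-freeness of $H^1_{\rm crys}$ (automatic for curves), all of which the citation packages for free. A second difference is worth keeping: you explicitly invoke Sin's Ext-vanishing \cite{sin} to upgrade equality of Brauer characters (equivalently, of composition multiplicities, by Brauer--Nesbitt) to the actual direct-sum decomposition \eqref{repforHdr}. The paper states Sin's theorem earlier in the section but never cites it in the proof of the corollary, even though without semisimplicity the mod $2$ reduction of a lattice in $W_+\oplus W_-$ is only well-defined up to composition factors; making that step explicit is a genuine gain in completeness over the paper's version.
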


\begin{proof}
In \cite[page 2535]{Gross}, Gross uses \cite[Theorem 6.1]{Lusztig} to prove that, as a ${\rm Sz}(q)$-representation,
the $\ell$-adic cohomology of the smooth projective curve ${\mathcal S}_m$ is:
\begin{equation*} \label{E4eigenspace}
H^1(\mathcal{S}_{m, \bar{\FF}_2}, \bar{\QQ}_\ell)  \simeq W_+ \oplus W_-.
\end{equation*}
By \cite[Theorem 2]{katzmessing}, the characters of $H^1(\mathcal{S}_{m, \bar{\FF}_2}, \bar{\QQ}_\ell)$ and 
$H^1_{\rm crys}(\mathcal{S}_m, {\rm Frac}(W(\bar{\FF}_2)))$ as representations of ${\rm Sz}(q)$ are the same, 
and thus the representations are isomorphic.
The de Rham cohomology is the reduction modulo $2$ of the crystalline cohomology.
Thus the result follows from Theorem~\ref{TLiu}.
\end{proof}

\begin{example} When $m=1$, then $\Hdr \simeq (V_0\oplus V_1 \oplus V_2)^2 \oplus V_\emptyset^4$.
\end{example}

\begin{example} When $m=2$, then
\[\Hdr \simeq
\left(V_{\{0,1\}}\oplus V_{\{1,2\}} \oplus V_{\{2,3\}} \oplus V_{\{3,4\}}\oplus V_{\{4,0\}}\right)^2 \oplus
(V_0\oplus V_1 \oplus V_2 \oplus V_3\oplus V_4)^4 \oplus V_\emptyset^8.\]
\end{example}

\begin{remark} For $m \leq 10$, we verified Corollary~\ref{Theoremrep} 
using the multiplicity of the eigenvalues for $\tau$ on $\Hdr$. 
\end{remark}

\section{The Dieudonn\'e module and de Rham cohomology}

In this section, we study the structure of the mod $2$ Dieudonn\'e module $D_m$ of the Suzuki curve ${\mathcal S}_m$ or, equivalently, the structure of $\Hdr$ as an $\EE$-module.

\subsection{Results and conjectures} \label{Sresults}


The chosen element $\tau \in {\rm Sz}(q)$ of order $q-1$ acts on the mod $2$ Dieudonn\'e module $D_m$.
Let $D_{m,0}$ denote the trivial eigenspace and $D_{m, \not = 0}$ denote the direct sum of the non-trivial eigenspaces.
Since $F$ and $V$ commute with $\tau$, 
they stabilize $D_{m, 0}$ and $D_{m, \not = 0}$; thus there is an 
$\EE$-module decomposition $D_m = D_{m, 0} \oplus D_{m, \not =0}$.

In Section \ref{Strivial}, we prove the next proposition; it determines the $\EE$-module structure of $D_{m,0}$.

\begin{proposition} \label{PEOsuztrivial}
Let $m \in \NN$ and let $q_0 = 2^m$.  The trivial eigenspace $D_{m,0}$ of the mod $2$ Dieudonn\'e module 
of the Suzuki curve has Ekedahl-Oort type $[0,1,1,2,2, \ldots, q_0-1,q_0]$;
in particular, it has rank $2q_0$, $2$-rank $0$, and $a$-number $2^{m-1}$.
\end{proposition}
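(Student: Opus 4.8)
The plan is to produce an explicit basis for the trivial $\tau$-eigenspace $D_{m,0}$ inside $H^1_{\mathrm{dR}}(\mathcal{S}_m)$ and to compute the actions of $F$ and $V$ directly on that basis, from which the canonical filtration, the final filtration, and hence the Ekedahl-Oort type can be read off. Recall the de Rham cohomology sits in the exact sequence $0 \to H^0(\mathcal{S}_m, \Omega^1) \to H^1_{\mathrm{dR}}(\mathcal{S}_m) \to H^1(\mathcal{S}_m, \CO) \to 0$, with $V$ acting as (the dual of) the Cartier operator on the holomorphic part and $F$ vanishing there, and $F$ acting as Frobenius on the $H^1(\mathcal{S}_m,\CO)$ part. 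The first task is to write down a basis of holomorphic differentials $\omega = y^a z^b\, \frac{dy}{\text{(something)}}$ on the Suzuki curve and to extract those on which $\tau$ acts trivially. Using $\tau: y \mapsto \zeta y,\ z \mapsto \zeta^{2^m+1}z$, a monomial differential has $\tau$-weight determined by a linear form in $(a,b)$ modulo $q-1$; the trivial eigenspace consists of those monomials whose weight is $\equiv 0$. I expect exactly $q_0$ holomorphic differentials and $q_0$ classes in $H^1(\mathcal{S}_m, \CO)$ landing in the trivial eigenspace, consistent with the asserted rank $2q_0$.

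Next I would compute the Cartier operator $\mathcal{C}$ on the trivial-eigenspace holomorphic differentials. Because $\mathcal{C}$ is a $p^{-1}$-linear operator that extracts the coefficient of $y^{p-1}\,dy/y$-type terms, and because the defining relation $z^q + z = y^{q_0}(y^q+y)$ lets one rewrite $z$-powers, the action of $\mathcal{C}$ (equivalently $V$) on the chosen basis should be essentially a shift together with some vanishing. The expected outcome is that $V$ maps the basis in a chain-like pattern: the images under successive applications of $V$ have dimensions growing as $0,1,1,2,2,\ldots$, matching the proposed Ekedahl-Oort type. Concretely, I would order the trivial-eigenspace monomials by their exponents and check that $V$ sends each basis element either to zero or to a lower one, producing the sawtooth sequence of image-dimensions $[\nu_1, \ldots, \nu_{q_0}] = [0,1,1,2,2,\ldots,q_0-1,q_0]$.

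Having the $V$-action and knowing $F=0$ on the holomorphic part (and dually $V=0$ on the Frobenius part), I would verify the two invariants as consistency checks: the $2$-rank is the number of indices where both $F$ and $V$ act bijectively, which should be $0$ since the curve is supersingular, and the $a$-number equals $\dime_k \ker(V) \cap \ker(F) = \dime_k \ker(\mathcal{C})$ on $D_{m,0}$, which the sawtooth pattern forces to be $2^{m-1} = q_0/2$. The final step is to confirm that the canonical filtration (smallest filtration stable under $F^{-1}$ and $V$) refines to a final filtration realizing exactly the claimed $\nu_i$-sequence; since $\mathcal{S}_m$ is supersingular the $\EE$-module $D_{m,0}$ decomposes into cyclic pieces of the form $\EE/\EE(V^e + F^e)$, and reading off the Ekedahl-Oort type $[0,1,1,\ldots,q_0-1,q_0]$ amounts to identifying these summands.

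The main obstacle will be the explicit computation of the Cartier operator on the Suzuki curve: the defining equation is not in a standard Artin-Schreier or superelliptic normal form that makes the Cartier action transparent, so one must carefully use the relation $z^q+z = y^{q_0}(y^q+y)$ to reduce arbitrary monomials $y^a z^b$ modulo exact differentials and isolate the $p$-th-power part. Organizing this reduction so that the $V$-action on the trivial eigenspace visibly produces the sawtooth dimension sequence — rather than a tangle of relations — is the delicate heart of the argument.
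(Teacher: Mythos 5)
Your strategy---work directly on $\mathcal{S}_m$, isolate the trivial $\tau$-eigenspace by weights of monomial differentials, and compute $F$ and $V$ explicitly---is not the paper's route, and as written it has a genuine gap: the entire content of the proposition is concentrated in the step you describe as an ``expected outcome,'' namely that the Cartier action on the trivial eigenspace produces the sawtooth pattern. You offer no method for establishing this for general $m$, and the paper's own experience indicates it is genuinely hard: the authors carry out such explicit $F,V$ computations only for $m=1$ (by hand, Table~\ref{Vtable}) and $m=2$ (by Magma). The paper instead proves the proposition structurally: $D_{m,0}$ is isomorphic as an $\EE$-module to $H^1_{\rm dR}(\mathcal{C}_{m,0})$ where $\mathcal{C}_{m,0}=\mathcal{S}_m/\langle\tau\rangle$; by \cite[Theorem 6.9]{GKT} this quotient is hyperelliptic of genus $q_0$; it inherits $2$-rank $0$ from $\mathcal{S}_m$; and by \cite[Corollary 5.3]{elkinpries} a hyperelliptic curve of $2$-rank $0$ in characteristic $2$ has its Ekedahl--Oort type completely forced by its genus, namely $[0,1,1,2,\ldots,q_0-1,q_0]$. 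That last classification result is the missing idea in your proposal: without it (or an equivalent), your plan is not a proof but a restatement of what must be shown.

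Two subsidiary claims in your sketch are also false and would derail the computation even if attempted. First, the Ekedahl--Oort type is not determined by the Cartier operator on $H^0(\mathcal{S}_m,\Omega^1)$ alone; you need $V$ on all of $H^1_{\rm dR}(\mathcal{S}_m)$, including on cocycle lifts of $H^1(\mathcal{S}_m,\CO)$ classes, and your parenthetical ``dually $V=0$ on the Frobenius part'' fails for any chosen complement of the holomorphic part: e.g.\ $V(\psi(f_{(2,1,0,0)}))=(0,\mathbf{g}_{(0,1,0,0)})\neq 0$ in Table~\ref{Vtable}. What is true is $V|_{\mathrm{im}(F)}=0$, but when the $2$-rank is $0$ the operator $F$ is nilpotent, so $\mathrm{im}(F)\cap\ker(F)\neq 0$ and $\mathrm{im}(F)$ is not a complement of $\ker(F)=H^0(\mathcal{S}_m,\Omega^1)$. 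Second, supersingularity does not force $D_{m,0}$ to decompose into cyclic summands $\EE/\EE(V^e+F^e)$: supersingularity does not even determine the Ekedahl--Oort type, and the module $\EE(Z)$ appearing in Proposition~\ref{m=2} is an indecomposable, non-cyclic $\EE$-module occurring in one of these Dieudonn\'e modules. The realistic repair of your argument is to replace the direct computation by the quotient-curve identification together with the Elkin--Pries classification, which is exactly what the paper does.
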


We have less information about the $\EE$-module structure of $D_{m, \not =0}$.
In Section \ref{Snontrivial}, we explain how the non-trivial representations $V_I$ in $\Hdr$
lead to $\EE$-submodules $D_I$ of the mod $2$ Dieudonn\'e module of $\Hdr$.
We would like to understand how to determine the $\EE$-module structure of $D_I$ from the representation $V_I$ 
for the subset $I \subset N=\ZZ/(2m+1)\ZZ$.  
In Section \ref{Snontrivial}, we consider a particular representation $W_m$, 
and make the following conjecture.


\begin{conjecture} \label{Ceasyword}
The multiplicity of $\EE/\EE(F^{2m+1}+V^{2m+1})$ in the mod $2$ 
Dieudonn\'e module $D_m$ of $\mathcal{S}_m$ is $4^m$. 
\end{conjecture}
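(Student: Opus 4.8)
The plan is to use Corollary~\ref{Theoremrep} to reduce the computation of the multiplicity to a small ``multiplicity module'' and then identify that module explicitly. Since ${\rm Ext}^1_{\bar{\FF}_2 {\rm Sz}(q)}(V_I,V_J)=0$ for all $I,J$ by \cite{sin}, $\Hdr$ is semisimple as an $\bar{\FF}_2{\rm Sz}(q)$-module, so $\Hdr=\bigoplus_I V_I\otimes M_I$ with $M_I=\Hom_{{\rm Sz}(q)}(V_I,\Hdr)$. Because $F$ is $2$-semilinear, $V$ is inverse-$2$-semilinear, and the Frobenius $x\mapsto x^2$ sends $V_i\mapsto V_{i+1}$, the operator $F$ carries the $V_I$-isotypic part to the $V_{I+1}$-isotypic part and $V$ to the $V_{I-1}$-isotypic part. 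Schur's lemma then produces semilinear maps $f_I\colon M_I\to M_{I+1}$ and $v_I\colon M_I\to M_{I-1}$, and for each orbit $\mathcal O$ of subsets under $I\mapsto I+1$ the isotypic block $\bigoplus_{I\in\mathcal O}V_I\otimes M_I$ is governed by the small $\EE$-module $Q_{\mathcal O}=(\bigoplus_{I\in\mathcal O}M_I,f,v)$: trivializing the canonical Frobenius-twist identifications $V_I\cong V_{I+1}^{(2)}$ around the loop by a Frobenius-descent (Lang) argument exhibits the block as $\dim V_{I_0}$ copies of $Q_{\mathcal O}$.

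The second step isolates the orbit producing $\EE/\EE(F^{2m+1}+V^{2m+1})$. The long-edge graph on $N=\ZZ/(2m+1)\ZZ$ is a single $(2m+1)$-cycle (as $\gcd(m+1,2m+1)=1$) and the good sets are its independent sets, so the maximal good sets are the $2m+1$ maximum independent sets of an odd cycle; these form one orbit under $I\mapsto I+1$, each has $|I|=m$, and by Corollary~\ref{Theoremrep} each $V_I$ occurs with multiplicity $2$ and dimension $4^m$. Hence $\dim M_I=2$ on this orbit $\mathcal O$, so $\dim Q_{\mathcal O}=2(2m+1)=\dim\EE/\EE(F^{2m+1}+V^{2m+1})$, while the block contributes $4^m\cdot 2(2m+1)$, exactly $4^m$ times the target. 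By the reduction it suffices to prove $Q_{\mathcal O}\cong\EE/\EE(F^{2m+1}+V^{2m+1})$; since the dimension already matches, I would show $Q_{\mathcal O}$ is the length-$(2m+1)$ single strand by establishing (i) that every $f_j$ and $v_j$ has rank $1$, and (ii) that the wrap-around satisfies $F^{2m+1}=V^{2m+1}\ne0$ on $M_{I_0}$. For (i), the twist identifications conjugate $f_j$ to $f_{j+1}$ and $v_j$ to $v_{j+1}$, so all $f_j$ share a rank $r$ and all $v_j$ a rank $r'$; the relations $\ker F={\rm im}\,V$ and $\ker V={\rm im}\,F$ give $r+r'=2$, and $p$-rank $0$ excludes $r\in\{0,2\}$ (which would produce multiplicative or \'etale summands), leaving $r=r'=1$. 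Granting (ii), the rank-one maps assemble into a single strand winding once in each direction, which is $\EE/\EE(F^{2m+1}+V^{2m+1})$, and tensoring back the $4^m$-dimensional factor $V_{I_0}$ yields multiplicity $4^m$.

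To finish I would confirm that no other part of $D_m$ contributes this module. The trivial eigenspace is $D_{m,0}=V_\emptyset^{\oplus 2^{m+1}}$ (dimension $2q_0$), whose Ekedahl-Oort type $[0,1,1,2,2,\ldots,q_0-1,q_0]$ from Proposition~\ref{PEOsuztrivial} consists of strands of length at most $m+1<2m+1$, so it contains no copy of $\EE/\EE(F^{2m+1}+V^{2m+1})$; and for the non-maximal good sets one has $\dim M_I=2^{m+1-|I|}>2$, and I would check from the explicit maps that the corresponding $Q_{\mathcal O}$ break into strands of type $\EE/\EE(F^a+V^b)$ with $(a,b)\ne(2m+1,2m+1)$. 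Two global identities corroborate the count: the dimensions of the three blocks sum to $2g_m$, and the $a$-numbers sum to $a_m=q_0(q_0+1)(2q_0+1)/6$, with the maximal good sets contributing exactly $4^m$ and $D_{m,0}$ contributing $2^{m-1}$ (for $m=1$ this reads $5=4+1$, with no non-maximal orbits present).

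The main obstacle is condition (ii): showing the wrap-around monodromy is nonzero, equivalently that the relation realized on the cyclic generator is exactly $F^{2m+1}+V^{2m+1}$ and not a proper divisor, for every $m$. Corollary~\ref{Theoremrep} fixes the dimensions $\dim M_I$ but says nothing about the alignment of the kernel and image lines of the $f_j,v_j$ around the cycle, so the rank information in (i) does not by itself determine whether the single strand closes up after $2m+1$ steps or breaks into shorter balanced pieces. Resolving this requires the explicit action of Frobenius and of the Cartier/Verschiebung operator: I would take the basis of $\Hdr$ built in Section~\ref{Sexplicitbasis}, locate the $2$-dimensional multiplicity spaces attached to the maximal good sets, compute the $1\times1$ transition maps between consecutive isotypic components, and show their product around the $(2m+1)$-cycle is a nonzero scalar. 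This is precisely the step carried out by hand for $m=1,2$ in Propositions~\ref{m=1}--\ref{m=2}, and turning those calculations into a uniform argument valid for all $m$ is what the conjecture leaves open.
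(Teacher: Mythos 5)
You have not proved the statement, and you say as much in your final paragraph; to be clear, the paper does not prove it either --- it is stated as a conjecture, motivated in Section~\ref{Snontrivial}, and verified only for $m=1,2$ by the explicit computations of Propositions~\ref{m=1} and~\ref{m=2}. Your reduction is, in substance, the same as the paper's own motivation, just packaged differently: where you use Sin's Ext-vanishing to split $\Hdr$ into isotypic pieces $V_I\otimes M_I$ and track semilinear transition maps $f_I,v_I$ between multiplicity spaces, the paper takes the canonical filtration of $D_{m,\neq 0}$, notes that $F^{-1}$ and $V$ permute its blocks $B_i$, and attaches to each orbit a word $w$ and an isotypic $\EE$-module $\EE(w)$ whose multiplicity is $\dim B_i$. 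Your orbit $\mathcal{O}$ of maximal good sets is exactly the paper's $W_m=\oplus_{i=0}^{2m}F^i(V_{I_m})$, your dimension count $4^m\cdot 2(2m+1)$ matches theirs, and your conditions (i)--(ii) are a restatement of the paper's three unproven claims (that $V_{I_m}$ lies in a unique block, that this block equals $V_{I_m}$, and that the word on its orbit is $(F^{-1})^{2m+1}V^{2m+1}$). So the attempt is a faithful elaboration of the heuristic, not a new route to a proof.

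The gap --- your condition (ii) --- is genuine and is the entire content of the conjecture. Concretely: every piece of data you invoke (the multiplicities from Corollary~\ref{Theoremrep}, rank-one transition maps, symmetry, $2$-rank $0$) is equally consistent with $Q_{\mathcal{O}}\simeq \EE/\EE(F^{2m}+V)\oplus\EE/\EE(F+V^{2m})$, i.e.\ with the strand breaking into two dual strands of length $2m+1$ instead of closing into one of length $2(2m+1)$; representation theory is blind to how the kernel and image lines align as one goes around the odd cycle. For $m=1$ this alternative can be excluded by the known $a$-number, since it would force $a$-number $4\cdot 2+1=9$ while $a_1=5$, but for general $m$ the $a$-number $a_m=q_0(q_0+1)(2q_0+1)/6$ is a single linear constraint shared among many orbits whose words are all unknown, so it cannot settle the question. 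What is needed is exactly what you defer to at the end: the explicit action of $F$ and the Cartier operator on the basis of Section~\ref{Sexplicitbasis}, computed uniformly in $m$ rather than case by case as in Table~\ref{Vtable} and the Magma computation for $m=2$. Two smaller soft spots in your write-up: the Frobenius-descent step identifying the isotypic block with $\dim V_{I_0}$ copies of $Q_{\mathcal{O}}$ is asserted rather than proved, and your exclusion of $r\in\{0,2\}$ via ``$2$-rank $0$'' implicitly uses the symmetry of $\Jac(\mathcal{S}_m)[2]$ (an $F$-bijective summand is multiplicative, and only its dual is \'etale), which should be said.
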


We verify Conjecture \ref{Ceasyword} for $m=1$ and $m=2$ in Propositions \ref{m=1} and \ref{m=2}.
In fact, for $m=1$ and $m=2$, we determine the mod $2$ Dieudonn\'e module $D_m$ completely. 
To do this, we find a basis for $H^1_{\rm dR}({\mathcal S}_m)$ for all $m$ in Section \ref{Sexplicitbasis}. 
For $m=1$, we explicitly compute the action of $F$ and $V$ on this basis, proving that:

\begin{proposition} \label{m=1}
When $m=1$, then 
the mod $2$ Dieudonn\'e module of ${\mathcal S}_1$ is
\[D_1=(\EE/\EE(F^3+V^3))^4 \oplus \EE/\EE(F^2+V^2).\]
\end{proposition}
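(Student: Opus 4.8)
The plan is to split $D_1$ along the action of $\tau$ and treat the two pieces separately. By the decomposition recalled in Section~\ref{Sresults}, $D_1 = D_{1,0} \oplus D_{1,\neq 0}$, and Proposition~\ref{PEOsuztrivial} already pins down the trivial part: for $m=1$ we have $q_0 = 2$, so $D_{1,0}$ is a module of dimension $2q_0 = 4$ with $2$-rank $0$ and $a$-number $2^{m-1}=1$, and the unique such module is $I_{2,1}$, i.e. $D_{1,0} \cong \EE/\EE(F^2+V^2)$ (its Ekedahl--Oort type is $[0,1]$). The real content of the proposition is therefore the identification $D_{1,\neq 0} \cong (\EE/\EE(F^3+V^3))^4$, a $24$-dimensional module. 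I would first record the constraints coming from Section~2: Corollary~\ref{Theoremrep} gives $\Hdr \simeq (V_0 \oplus V_1 \oplus V_2)^2 \oplus V_\emptyset^4$ for $m=1$, and a direct count of the eigenvalues of $\tau \mapsto M$ on the $V_i$ shows that each of the seven eigenvalues $1,\zeta,\dots,\zeta^6$ occurs with multiplicity $4$; the eigenvalue $1$ accounts for $D_{1,0}$ and the remaining six for $D_{1,\neq 0}$.

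Next I would exploit semilinearity. Since $F$ and $V$ commute with $\tau$ and are respectively $2$- and $2^{-1}$-semilinear, $F$ carries the $\zeta^i$-eigenspace into the $\zeta^{2i}$-eigenspace and $V$ into the $\zeta^{4i}$-eigenspace; hence both operators permute the eigenspaces along the orbits of $i \mapsto 2i \bmod 7$, namely $\{0\}$, $\{1,2,4\}$, and $\{3,5,6\}$. Each indecomposable $\EE$-summand is therefore confined to a single orbit. A short bookkeeping check shows that a single copy of $\EE/\EE(F^3+V^3)$ whose generator lies in a size-$3$ orbit contributes multiplicity $2$ to each of the three eigenvalues in that orbit (with $e,F^3e$ in $\zeta^i$, with $Fe,V^2e$ in $\zeta^{2i}$, and with $F^2e,Ve$ in $\zeta^{4i}$, using $F^3e=V^3e$). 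Since every eigenvalue has multiplicity $4$, each of the two size-$3$ orbits must contribute exactly two copies, giving the predicted total of four. This already forces the multiset of $\tau$-eigenvalues; together with the vanishing $2$-rank (which rules out any \'etale or multiplicative factor, so everything is local-local) and the $a$-number $a_1 = 5 = 4 + 1$ from \cite{FGMPW}, it is fully consistent with $(\EE/\EE(F^3+V^3))^4 \oplus \EE/\EE(F^2+V^2)$.

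To upgrade this bookkeeping into a proof, I would use the explicit basis of $\Hdr$ from Section~\ref{Sexplicitbasis}, adapted to the Hodge filtration
\[
0 \to H^0(\mathcal{S}_1,\Omega^1) \to H^1_{\rm dR}(\mathcal{S}_1) \to H^1(\mathcal{S}_1,\mathcal{O}) \to 0,
\]
and compute the matrices of $F$ and $V$. Restricted to the holomorphic part, $V$ is the Cartier operator $\mathcal{C}$, which I would evaluate on the basis differentials using the relation $dz = y^2\,dy$ (obtained by differentiating $z^8+z=y^{10}+y^3$ in characteristic $2$) together with the rules $\mathcal{C}(f^2\omega)=f\,\mathcal{C}(\omega)$ and $\mathcal{C}(df)=0$; the operator $F$ annihilates $H^0(\mathcal{S}_1,\Omega^1)$ and acts as the $2$-power Frobenius on $H^1(\mathcal{S}_1,\mathcal{O})$, and I would recover it from the adjunction between $F$ and $V$ under the cup-product pairing on $H^1_{\rm dR}(\mathcal{S}_1)$ to halve the work. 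With the matrices in hand I would exhibit, in each size-$3$ orbit, two generators $e$ for which $e,Fe,F^2e,Ve,V^2e,F^3e$ are linearly independent with $F^3e=V^3e$ and $F^4e=V^4e=0$; this identifies each cyclic submodule with $\EE/\EE(F^3+V^3)$ and, by the dimension count, splits $D_{1,\neq 0}$ as their direct sum.

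The main obstacle is the explicit Cartier-operator computation: the holomorphic differentials on $\mathcal{S}_1$ are monomials $y^a z^b\,dy$ of fairly high degree, and evaluating $\mathcal{C}$ on them requires repeatedly reducing modulo exact differentials and the curve relation to return to the chosen basis. The representation-theoretic and numerical constraints rigidly organize the answer, but they do not by themselves distinguish the cyclic module $\EE/\EE(F^3+V^3)$ from other local-local modules sharing the same eigenvalue distribution; only the explicit verification that the $F$-chains have full length $3$ and fuse with the $V$-chains via $F^3=V^3$ completes the identification.
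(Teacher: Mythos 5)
Your proposal is correct, and its decisive step is the same as the paper's: the paper proves Proposition~\ref{m=1} by computing $F$ and $V$ explicitly on the \v{C}ech basis $\psi(A)\cup\lambda(B)$ from Section~\ref{Sexplicitbasis} (squaring the $f_{(a,b,c,d)}$ and reducing modulo coboundaries for $F$, applying the Cartier operator for $V$, summarized in Table~\ref{Vtable}) and then exhibiting five cyclic generators, one satisfying $(F^2+V^2)X=0$ and four satisfying $(F^3+V^3)X=0$ --- exactly the verification you defer to at the end. Where you genuinely differ is in the organization. The paper's proof makes no use of the $\tau$-decomposition: it reads everything, including the $\EE/\EE(F^2+V^2)$ summand, directly off the table, and only remarks afterwards that this summand is $D_{1,0}$. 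You instead dispose of $D_{1,0}$ structurally via Proposition~\ref{PEOsuztrivial} (the hyperelliptic quotient and \cite{elkinpries}), and you use Corollary~\ref{Theoremrep} plus the semilinearity of $F$ and $V$ (which permute the $\tau$-eigenspaces along orbits of $i\mapsto 2i \bmod 7$) to confine the explicit computation to the $24$-dimensional space $D_{1,\neq 0}$ and to predict its shape in advance. Your route buys a smaller computation and an explanation of why the answer has the form it does; the paper's buys a single self-contained table with named generators, and in particular an independent computational confirmation of the trivial eigenspace. Two cautions about your plan: recovering $F$ from $V$ by adjunction requires computing the cup-product pairing on $H^1_{\rm dR}(\mathcal{S}_1)$ in the same basis, which is not obviously less work than the paper's direct reduction of $f^2$ modulo coboundaries; and the final ``dimension count'' must include checking that the four cyclic submodules have trivial pairwise intersections, i.e.\ that their $24$ spanning vectors are independent (visible in the paper because the generators' $F$- and $V$-images are distinct basis elements). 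Finally, your caveat that eigenvalue and $a$-number bookkeeping cannot by itself pin down the $\EE$-module structure is well placed: the $m=2$ answer in Proposition~\ref{m=2}, where the non-cyclic module $\EE(Z)$ occurs, shows that this danger is real.
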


For $m=2$,  we determine the action of $F$ and $V$ on $H^1_{\rm dR}({\mathcal S}_m)$ using 
Magma \cite{Magma}.
Consider the $\EE$-module $\EE(Z)$ generated by $X_1, X_2, X_3$ with the following relations:
$V^3 X_1 - F^3 X_2=0$; $V^4X_2 - F^3 X_3=0$; and $V^3 X_3 - F^4 X_1=0$. 
Then $\EE(Z)$ is symmetric and has rank $20$, $p$-rank $0$, and $a$-number $3$.

\begin{proposition} \label{m=2}
When $m=2$, then the mod $2$ Dieudonn\'e module of ${\mathcal S}_2$ is 
\[D_2=\left(\EE/\EE(F^5+V^5)\right)^{16} \oplus \left(\EE(Z)\right)^4 \oplus (\EE/\EE(F^3+V^3) \oplus \EE/\EE(F+V)).\]
\end{proposition}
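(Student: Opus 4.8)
The plan is to determine the full $\EE$-module structure of $D_2 = H^1_{\rm dR}(\mathcal{S}_2)$ by computing $F$ and $V$ on the explicit basis of Section~\ref{Sexplicitbasis} and organizing the computation through the commuting action of the order-$31$ element $\tau$. Here $m=2$, so $q_0=4$, $q=32$, and $\dim_{\bar{\FF}_2} D_2 = 2g_2 = 248$. First I would record how $F$ and $V$ move $\tau$-eigenspaces: since $\tau$ acts linearly while $F$ is $2$-semilinear and $V$ is $2^{-1}$-semilinear, a vector of $\tau$-character $a\in\ZZ/31$ is sent by $F$ to character $2a$ and by $V$ to character $16a \equiv 2^{-1}a$. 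Consequently each orbit $O$ of multiplication by $2$ on $\ZZ/31$ spans an $\EE$-submodule $D_2[O]$, and $D_{2,\not=0} = \bigoplus_O D_2[O]$. As $2$ has order $5$ modulo $31$, the nonzero characters fall into six orbits of size $5$; by Corollary~\ref{Theoremrep} every character of $\langle\tau\rangle$ occurs in $D_2$ with multiplicity $8$, so $D_{2,0}$ has rank $8$ and each of the six blocks $D_2[O]$ has rank $40$.

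Next I would dispose of the trivial block. By Proposition~\ref{PEOsuztrivial} the Ekedahl-Oort type of $D_{2,0}$ is known, which determines $D_{2,0}$ up to isomorphism, and I would check that $\EE/\EE(F^3+V^3)\oplus\EE/\EE(F+V)$ has this type (both have rank $8$, $2$-rank $0$, and $a$-number $2$), giving the last two summands. For the six orbit blocks, Corollary~\ref{Theoremrep} already predicts two qualitatively different types. For four of the orbits -- those containing a $\tau$-eigenvalue of the $4$-dimensional representations $V_{\{i\}}$ -- the multiplicity $8$ splits as $4+4$ between the degree-$4$ and degree-$16$ constituents; for the remaining two orbits (for instance the orbits of the characters $3$ and $7$, which meet none of the $V_{\{i\}}$) the entire multiplicity $8$ comes from the degree-$16$ representations $V_{\{i,i+1\}}$. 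This dichotomy is what should force the two different indecomposable summands.

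The heart of the proof is the explicit computation of $F$ and $V$ on each block, which I would carry out in Magma~\cite{Magma} from the basis of Section~\ref{Sexplicitbasis}, using the Hodge filtration $0 \to H^0(\Omega^1_{\mathcal{S}_2}) \to H^1_{\rm dR}(\mathcal{S}_2) \to H^1(\mathcal{O}_{\mathcal{S}_2}) \to 0$ together with the identities $\ker F = \operatorname{im} V$ and $\ker V = \operatorname{im} F$ to pin down the string lengths in each $\tau$-eigenspace. For each of the four orbits meeting the $V_{\{i\}}$ I expect all string lengths to be equal, so that $D_2[O] \cong (\EE/\EE(F^5+V^5))^4$; summing over these four orbits gives the $16=4^2$ copies predicted by Conjecture~\ref{Ceasyword}. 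For each of the two remaining orbits I expect the computation to yield, after a choice of generators $X_1,X_2,X_3$ adapted to the Hodge filtration, precisely the relations $V^3X_1 = F^3X_2$, $V^4X_2 = F^3X_3$, $V^3X_3 = F^4X_1$, so that $D_2[O] \cong (\EE(Z))^2$. I would then confirm from this presentation that $\EE(Z)$ has rank $20$ (the relations close into a single cycle of string lengths $3,3,4$), that $\ker F \cap \ker V$ is spanned by the three overlap vectors $F^4X_1 = V^3X_3$, $F^3X_2 = V^3X_1$, $F^3X_3 = V^4X_2$, so that its $a$-number is $3$ and its $2$-rank is $0$, and that the substitution $F\leftrightarrow V$, $X_1\leftrightarrow X_2$ preserves the relations, exhibiting $\EE(Z)$ as symmetric.

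Assembling the blocks yields
\[ D_2 \cong (\EE/\EE(F^5+V^5))^{16} \oplus (\EE(Z))^4 \oplus \EE/\EE(F^3+V^3) \oplus \EE/\EE(F+V), \]
of total rank $160+80+6+2 = 248$, with $a$-number $16\cdot 1 + 4\cdot 3 + 1 + 1 = 30 = a_2$, matching the value $q_0(q_0+1)(2q_0+1)/6$ of \cite{FGMPW}, and with $2$-rank $0$ as required by supersingularity; the underlying $\tau$-representation reproduces Corollary~\ref{Theoremrep}. The main obstacle is the orbit-by-orbit computation of the Cartier operator $V$ on the explicit basis and, within it, proving that the two orbits meeting only the degree-$16$ representations produce the non-split module $\EE(Z)$ rather than a direct sum of cyclic modules; separating these two possibilities is precisely a matter of reading the correct string lengths $3,3,4$ off the Hodge filtration in the relevant eigenspaces.
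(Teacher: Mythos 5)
Your proposal is correct and takes essentially the same route as the paper: the paper likewise obtains the summand $\EE/\EE(F^3+V^3)\oplus\EE/\EE(F+V)$ as the trivial eigenspace $D_{2,0}$ via Proposition~\ref{PEOsuztrivial} and Proposition~\ref{Palg}, and determines $D_{2,\not=0}$ by computing $F$ and $V$ on the explicit basis of Section~\ref{Sexplicitbasis} with Magma, deferring the decisive string-length and relation data to that computation exactly as you do. Your extra bookkeeping---organizing $D_{2,\not=0}$ by the six orbits of multiplication by $2$ on the nontrivial $\tau$-characters in $\ZZ/31\ZZ$, the $4+4$ versus $8$ multiplicity dichotomy, and the rank, $a$-number, and $2$-rank consistency checks---is accurate and consistent with the paper's framework in Section~\ref{Snontrivial}, but it refines rather than replaces the paper's argument.
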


\subsection{The trivial eigenspace} \label{Strivial}

The eigenspace $D_{m,0}$ is the subspace of $H^1_{{\rm dR}}(\mathcal{S}_m)$ 
of elements fixed by $\tau$. 
Since $\tau$ acts fixed point freely on the $4$-dimensional module $V_i$ for each $i$ \cite[proof of Lemma 3]{martineau}, 
the generators of $H_{{\rm dR}}^1(\mathcal{S}_m)$ which are fixed by $\tau$ are 
exactly those in $V_I$ for $I=\emptyset$.
In other words, the representation for $D_{m,0}$ consists of the $2^{m+1}=2q_0$ 
copies of the trivial representation in \eqref{repforHdr}.  

\begin{proof} (Proof of Proposition \ref{PEOsuztrivial})
Let $\mathcal{C}_{m,0}$ be the quotient curve of $\mathcal{S}_m$ by the subgroup $\langle \tau \rangle$.  
Then $\mathcal{C}_{m,0}$ is a hyperelliptic curve of genus $q_0$ by \cite[Theorem 6.9]{GKT}.  

The de Rham cohomology $H^1_{{\rm dR}}(\mathcal{C}_{m,0})$ of $\mathcal{C}_{m,0}$ is isomorphic as an 
${\mathbb E}$-module to $D_{m,0}$.  
Thus the trivial eigenspace $D_{m,0}$ for the mod $2$ Dieudonn\'e module of $\mathcal{S}_m$
is isomorphic to the mod $2$ Dieudonn\'e module of $C_{m,0}$; in particular, it has rank $2q_0$.

Since ${\mathcal S}_m$ has $2$-rank $0$, so does $\mathcal{C}_{m,0}$.  
Thus $C_{m,0}$ is a hyperelliptic curve of $2$-rank $0$.
By \cite[Corollary~5.3]{elkinpries}, the Ekedahl-Oort type of $C_{m,0}$ is $[0,1,1,2,2, \ldots, q_0-1, q_0]$;
this implies that the $a$-number is $2^{m-1}$.
\end{proof}
  
We determine the $\EE$-module structure of $D_{m,0}$ by applying results from \cite[Section 5]{elkinpries}.  

\begin{proposition}  \label{Pdieu} \cite[Proposition 5.8]{elkinpries}
The mod 2 Dieudonn\'e module $D_{m,0}$ is the $\mathbb{E}$-module generated 
as a $k$-vector space by $\{X_1, \ldots, X_{q_0}, Y_1, \ldots, Y_{q_0}\}$ 
with the actions of $F$ and $V$ given by:
\begin{enumerate}
\item
$F(Y_j) = 0$.

\item
$V(Y_j) = 
\begin{cases}
Y_{2j} & \text{if $j \leq {q_0}/2$,}\\
0 & \text{if $j > {q_0}/2$.}
\end{cases}$

\item
$F(X_i)=
\begin{cases}
X_{j/2} & \text{if $j$ is even,}\\
Y_{q_0-(j-1)/2} & \text{if $j$ is odd.}
\end{cases}$

\item 
$V(X_j)=
\begin{cases}
0 & \text{if $j \leq (q_0-1)/2$,}\\
-Y_{2q_0-2j+1} & \text{if $j > (q_0-1)/2$.}
\end{cases}$
\end{enumerate}
\end{proposition}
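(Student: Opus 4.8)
The plan is to prove the explicit presentation by computing the de Rham cohomology of the hyperelliptic quotient curve $\mathcal{C}_{m,0}$ together with its $F$ and $V$ operators, following \cite[Section 5]{elkinpries}. By the discussion preceding Proposition~\ref{PEOsuztrivial}, there is an isomorphism of $\EE$-modules $D_{m,0} \cong H^1_{{\rm dR}}(\mathcal{C}_{m,0})$, and $\mathcal{C}_{m,0}$ is a hyperelliptic curve of genus $q_0$ and $2$-rank $0$. First I would fix the Artin--Schreier model $y^2 + y = f(x)$ of $\mathcal{C}_{m,0}$ coming from \cite{GKT}, normalized so that $f$ is a polynomial with a single, odd-order pole at infinity. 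This is the point of contact with the normal form of \cite[Section 5]{elkinpries}, and it is exactly where the $2$-rank $0$ hypothesis enters: for a hyperelliptic curve $y^2+y=f(x)$ in characteristic $2$ the $2$-rank equals one less than the number of poles of the reduced $f$, so $2$-rank $0$ forces a single branch point.

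Next I would write down a $k$-basis of $H^1_{{\rm dR}}(\mathcal{C}_{m,0})$ adapted to the Hodge filtration
\[ 0 \to H^0(\mathcal{C}_{m,0}, \Omega^1) \to H^1_{{\rm dR}}(\mathcal{C}_{m,0}) \to H^1(\mathcal{C}_{m,0}, \mathcal{O}) \to 0. \]
The $q_0$ holomorphic differentials, spanning $H^0(\Omega^1)$, give the generators $Y_1,\dots,Y_{q_0}$; since the Frobenius operator $F$ on $H^1_{{\rm dR}}$ annihilates holomorphic forms, this already yields item (1), $F(Y_j)=0$, and identifies $\langle Y_1,\dots,Y_{q_0}\rangle = \ker F = {\rm Im}\, V$. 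I would then choose meromorphic differentials of the second kind $\eta_1,\dots,\eta_{q_0}$, with poles supported at the Weierstrass point, whose classes lift a basis of the quotient $H^1(\mathcal{O})$; these furnish $X_1,\dots,X_{q_0}$. Both families are indexed so as to track the monomials $x^a\,dx$ and $y\,x^a\,dx$, which is what makes the doubling map $a \mapsto 2a$ and halving map $a\mapsto a/2$ governing $V$ and $F$ read off cleanly.

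Then I would compute the two semilinear operators. The restriction of $V$ to $H^0(\Omega^1)$ agrees with the Cartier operator, and the explicit rule $\mathcal{C}(h(x)^2\,\omega)=h(x)\,\mathcal{C}(\omega)$ together with the vanishing of $\mathcal{C}(x^i\,dx)$ for $i$ even produces the index-doubling formula of item (2) and the vanishing once $2j$ exceeds $q_0$. The Frobenius $F$ is the $p$-power map on $H^1(\mathcal{O})$; pulling back the chosen representatives by $x\mapsto x^2$, reducing modulo exact differentials, and using $y^2 = y + f(x)$ to re-express the $y$-terms gives item (3): for even index the class halves to another $X$, while for odd index the reduction falls into the holomorphic part and contributes a $Y$. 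The formula of item (4) for $V$ on the $X_i$ is then forced by duality, using the cup-product pairing to fix both the target index and the sign. Finally I would verify that the resulting module satisfies $FV=VF=0$, is symmetric under the principal polarization, and realizes the Ekedahl--Oort type established in Proposition~\ref{PEOsuztrivial}, confirming that no generator or relation has been omitted.

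The main obstacle is the precise bookkeeping in the Frobenius computation of item (3) and its dual in item (4): unlike the Cartier action on holomorphic forms, which is essentially combinatorial, computing $F$ requires a careful choice of the second-kind representatives $\eta_i$ and reduction of the $x\mapsto x^2$ pullbacks modulo the exact differentials and modulo the defining relation, and it is exactly here that the off-diagonal coupling terms $Y_{q_0-(j-1)/2}$ and $-Y_{2q_0-2j+1}$ arise. Pinning down the index ranges and the single sign, and checking that these reductions are independent of the choices of representatives, is the technical heart of the argument; everything else is formal once the normal form from \cite{GKT} and the Cartier computation are in hand.
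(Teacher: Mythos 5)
The first thing to say is that the paper contains no proof of this statement at all: Proposition~\ref{Pdieu} is quoted verbatim from \cite[Proposition 5.8]{elkinpries}, and the only argument supplied at the level of this paper is the reduction carried out around Proposition~\ref{PEOsuztrivial} --- namely that $D_{m,0}\cong H^1_{\rm dR}(\mathcal{C}_{m,0})$ for the quotient curve $\mathcal{C}_{m,0}=\mathcal{S}_m/\langle\tau\rangle$, which by \cite[Theorem 6.9]{GKT} is hyperelliptic of genus $q_0$, and which inherits $2$-rank $0$ from $\mathcal{S}_m$. You have that reduction exactly right, including where the $2$-rank $0$ hypothesis enters (a single branch point in the Artin--Schreier model, by Deuring--Shafarevich). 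Beyond that, your proposal is a blind reconstruction of the Elkin--Pries argument itself, and in outline it is the right method: it is the same Hodge-filtration-plus-Cartier-plus-Frobenius computation that this paper performs for $\mathcal{S}_1$ in Section~\ref{Sexplicitbasis} (compare Tables~\ref{thetable} and~\ref{Vtable}).

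Two substantive soft spots in the reconstruction, both at what you correctly identify as the technical heart. First, item (3) is harder than ``pull back by $x\mapsto x^2$ and reduce'': for a general $2$-rank-$0$ model $y^2+y=f(x)$, rewriting $y^2=y+f(x)$ injects the coefficients of $f$ into the reduction, so the coefficient-free index formulas of the proposition cannot hold for the naive monomial basis. One needs either the specific normalization of $f$ used in \cite[Section 5]{elkinpries} together with a coefficient-absorbing change of basis, or else the alternative route through the Ekedahl--Oort type (\cite[Corollary 5.3]{elkinpries}, quoted here in the proof of Proposition~\ref{PEOsuztrivial}) combined with the bijection between final types and symmetric $\EE$-modules, which produces the standard form automatically; the uniformity of the answer over all such curves is exactly the nontrivial constancy of the EO type on the $2$-rank-$0$ hyperelliptic locus, which your sketch would be silently re-proving. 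Second, deducing item (4) ``by duality'' requires the de Rham cup product computed explicitly in your chosen basis, an additional computation of the same order as (3); the more economical alternative is to compute $V$ on the $X_j$ directly via the Cartier operator on the $\mathbf{df}$-components of the cocycle representatives, as this paper does for $m=1$. One small point: in characteristic $2$ the sign in item (4) is vacuous ($-Y=Y$), so there is no sign to pin down. With those two steps filled in, your plan would constitute a legitimate independent proof of the cited result rather than, as in the paper, an appeal to it.
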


We have an explicit description of the generators and relations of $D_{m,0}$ as follows.

\begin{notation} \label{Nbij} \cite[Notation 5.9]{elkinpries} 
Fix $c=q_0 \in \NN$. 
Consider the set $I=\{j \in \NN \mid \lceil (c+1)/2 \rceil \leq j \leq c\}$, which has cardinality $\lfloor (c+1)/2 \rfloor$.
For $j \in I$, let $\ell(j)$ be the odd part of $j$ and let $e(j) \in {\mathbb Z}^{\geq 0}$ be such that $j=2^{e(j)}\ell(j)$.
Let $s(j)=c-(\ell(j)-1)/2$.  
Then $\{s(j) \mid j \in I\}=I$.
Also, let $m(j)=2c-2j+1$ and let $\epsilon(j) \in {\mathbb Z}^{\geq 0}$ be such that $t(j):=2^{\epsilon(j)}m(j) \in I$.  
Then $\{t(j) \mid j \in I\}=I$.
Thus, there is a unique bijection $\iota:I \to I$ such that $t(\iota(j))=s(j)$ for each $j \in I$.
\end{notation}

\begin{proposition} \label{Palg} \cite[Proposition 5.10]{elkinpries}
The set $\{X_j \mid j \in I\}$ generates the mod 2 Dieudonn\'e module $D_{m,0}$ as an ${\mathbb E}$-module
subject to the relations: $F^{e(j)+1}(X_j) + V^{\epsilon(\iota(j))+1}(X_{\iota(j)})$ for $j \in I$.
\end{proposition}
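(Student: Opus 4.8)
The plan is to present $D_{m,0}$ as a quotient of a free $\EE$-module and to identify it with the module of Proposition~\ref{Pdieu} by a dimension count. Set $c = q_0 = 2^m$, so that $D_{m,0}$ has $k$-basis $\{X_1, \dots, X_c, Y_1, \dots, Y_c\}$ and $\dime_k D_{m,0} = 2c$. Let $M$ be the free $\EE$-module on symbols $\{\tilde X_j : j \in I\}$ modulo the submodule generated by $R_j := F^{e(j)+1}\tilde X_j + V^{\epsilon(\iota(j))+1}\tilde X_{\iota(j)}$, for $j \in I$. The goal is to show that the evident map $\phi \colon M \to D_{m,0}$ sending $\tilde X_j \mapsto X_j$ is an isomorphism of $\EE$-modules.

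First I would check that the relations $R_j$ are satisfied in $D_{m,0}$, so that $\phi$ is well defined. Writing $j = 2^{e(j)}\ell(j)$ with $\ell(j)$ odd as in Notation~\ref{Nbij} and applying part (3) of Proposition~\ref{Pdieu} repeatedly peels off the factors of two, giving $F^{e(j)}(X_j) = X_{\ell(j)}$ and then $F^{e(j)+1}(X_j) = Y_{s(j)}$ with $s(j) = c - (\ell(j)-1)/2$. Dually, part (4) gives $V(X_j) = -Y_{m(j)}$ with $m(j) = 2c-2j+1$ odd, and part (2) doubles the subscript, so $V^{\epsilon(j)+1}(X_j) = -Y_{t(j)}$ with $t(j) = 2^{\epsilon(j)}m(j) \in I$. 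Since $\iota$ is defined by $t(\iota(j)) = s(j)$, the image of $R_j$ equals $Y_{s(j)} - Y_{t(\iota(j))} = 0$, so $\phi$ exists.

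Next I would prove $\phi$ is surjective by showing $\{X_j : j \in I\}$ generates $D_{m,0}$. The computation above shows the $F$-orbit of $X_j$ runs $X_j, X_{j/2}, \dots, X_{\ell(j)}, Y_{s(j)}$ and the $V$-orbit runs $-Y_{m(j)}, -Y_{2m(j)}, \dots, -Y_{t(j)}$. Every index $\ell \in \{1, \dots, c\}$ has a unique dyadic ancestor in $I$, namely the unique multiple $2^a\ell$ lying in $(c/2, c]$; hence the $F$-orbits of the $X_j$ partition $\{X_1,\dots,X_c\}$, which also gives $\sum_{j \in I}(e(j)+1) = c$. Similarly $\{m(j) : j \in I\}$ is exactly the set of odd integers in $[1,c]$, and every $Y$-index factors as a power of two times an odd number, so the $V$-orbits partition $\{Y_1, \dots, Y_c\}$, giving $\sum_{j \in I}(\epsilon(j)+1) = c$. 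Thus every basis vector lies in the $\EE$-span of $\{X_j : j \in I\}$.

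The crux, and the step I expect to be the main obstacle, is the bound $\dime_k M \le 2c$. The relation $FV = VF = 0$ forces every monomial of $\EE$ containing both $F$ and $V$ to vanish, so $M$ is spanned over $k$ by the pure powers $F^a\tilde X_j$ ($a \ge 0$) and $V^b\tilde X_j$ ($b \ge 1$). Applying $F$, respectively $V$, to $R_j$ annihilates one term and yields $F^a\tilde X_j = 0$ for $a \ge e(j)+2$ and, since $\iota$ permutes $I$, $V^b\tilde X_j = 0$ for $b \ge \epsilon(j)+2$; this truncates the spanning set to $\{F^a\tilde X_j : 0 \le a \le e(j)+1\} \cup \{V^b\tilde X_j : 1 \le b \le \epsilon(j)+1\}$, of total size $\sum_{j\in I}(e(j)+\epsilon(j)+3) = 2c + |I|$ by the two identities of the previous paragraph. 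The $|I|$ relations $R_j$ each equate a distinct top $F$-power $F^{e(j)+1}\tilde X_j$ with a distinct top $V$-power $V^{\epsilon(\iota(j))+1}\tilde X_{\iota(j)}$, these being distinct because $\iota$ is a bijection, so they remove $|I|$ further dimensions and leave $\dime_k M \le 2c$. Combined with the surjection $\phi$ and $\dime_k D_{m,0} = 2c$, this forces $\phi$ to be an isomorphism of $\EE$-modules, and in particular shows that the $R_j$ generate all relations among the $X_j$.
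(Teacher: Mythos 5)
Your proof is correct, but note that there is nothing in the paper to compare it against: Proposition~\ref{Palg}, like Proposition~\ref{Pdieu} and Notation~\ref{Nbij}, is quoted from Elkin--Pries \cite[Proposition 5.10]{elkinpries} with no proof given here, so the paper's ``proof'' is the citation itself. What your argument does is make the step self-contained modulo Proposition~\ref{Pdieu}, and all three of its stages check out. (a) Well-definedness: $F^{e(j)+1}X_j = Y_{s(j)}$ and $V^{\epsilon(j)+1}X_j = -Y_{t(j)}$ follow by iterating parts (2)--(4) of Proposition~\ref{Pdieu}, and the defining property $t(\iota(j))=s(j)$ of $\iota$ kills the relation (the sign is harmless in characteristic $2$). (b) Generation: your key observation that each doubling chain $\{2^a u\}$ meets the integer interval $I=(q_0/2,\,q_0]$ exactly once shows the $F$-orbits of the $X_j$ partition $\{X_1,\dots,X_{q_0}\}$ and the $V$-orbits partition $\{Y_1,\dots,Y_{q_0}\}$, and yields the two counting identities $\sum_{j\in I}(e(j)+1)=\sum_{j\in I}(\epsilon(j)+1)=q_0$ that you reuse later. (c) The dimension bound: since $FV=VF=0$, the abstractly presented module is spanned by pure $F$- and $V$-powers of the generators; applying $F$ and $V$ to the relations truncates these at $F^{e(j)+1}$ and $V^{\epsilon(j)+1}$ (the latter because $\iota$ is a bijection of $I$), and the $|I|$ relations themselves identify the $|I|$ distinct top $F$-powers with the $|I|$ distinct top $V$-powers, giving $\dim_k M \le 2q_0 = \dim_k D_{m,0}$, so the surjection is an isomorphism. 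Step (c) is exactly what upgrades ``these relations hold'' to ``this is a presentation,'' which is the actual content of the proposition; this is presumably close in spirit to the original Elkin--Pries argument, but relative to this paper it is genuinely new material and a valid replacement for the citation.
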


\begin{example}
\begin{enumerate}
\item When $m=1$ and the Ekedahl-Oort type is $[0,1]$, then
$D_{1,0} \simeq \EE/\EE(F^2+V^2)$ (group scheme $I_{2,1}$).

\item When $m=2$ and the Ekedahl-Oort type is $[0,1,1,2]$, then 
$D_{2,0} \simeq \EE/\EE(F+V) \oplus \EE/\EE(F^3+V^3)$ (group scheme $I_{1,1}\oplus I_{3,1}$).


%
\end{enumerate}
\end{example}

In the next result, we determine some of the ${\mathbb E}$-submodules of 
$D_{m,0}$ for general $m$.

\begin{proposition}\label{W0structure}
The ${\mathbb E}$-module ${\mathbb E}/\EE(V^{e+1} + F^{e+1})$
occurs as an ${\mathbb E}$-submodule of $D_{m,0}$ if and only if 
$2^m \equiv 2^e \bmod 2^{e+1} +1$.
In particular:
\begin{enumerate}
\item ${\mathbb E}/\EE(V^{m+1} + F^{m+1})$ occurs for all $m$;

\item ${\mathbb E}/\EE(V+F)$ occurs if and only if $m$ is even; and

\item ${\mathbb E}/\EE(V^2+F^2)=0$ occurs if and only if $m \equiv 1 \bmod 4$.
\end{enumerate}
\end{proposition}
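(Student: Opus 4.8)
The plan is to read $D_{m,0}$ off from Propositions~\ref{Pdieu}--\ref{Palg} and Notation~\ref{Nbij} and to match its indecomposable direct summands (this is the sense of ``occurs'' that turns the examples above into genuine direct-sum decompositions) against the arithmetic hypothesis. The first step is to rewrite that hypothesis: multiplying $2^m\equiv 2^e \bmod 2^{e+1}+1$ by $2$ and using $2^{e+1}\equiv -1 \bmod 2^{e+1}+1$ shows it is equivalent to $(2^{e+1}+1)\mid (2^{m+1}+1)$. When this holds, set $\ell=(2^{m+1}+1)/(2^{e+1}+1)$, which is an odd integer, and $j=2^e\ell$; a short computation gives $2^{m-1}<j\le 2^m$, so $j\in I$, with $\ell=\ell(j)$ its odd part and $e=e(j)$.

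For the forward implication I would verify that $X_j$ generates the required summand. By Proposition~\ref{Pdieu}, applying $F$ to $X_j$ a total of $e+1$ times first halves the index down to the odd number $\ell$ and then passes to the $Y$'s, giving $F^{e+1}(X_j)=Y_{q_0-(\ell-1)/2}=Y_{s(j)}$. On the other hand $V(X_j)=\pm Y_{m(j)}$ with $m(j)=2q_0-2j+1$, and each further $V$ doubles the index, so $V^{e+1}(X_j)=\pm Y_{2^e m(j)}$. The divisibility forces $m(j)=\ell$ and $2^e m(j)=s(j)$, so $V^{e+1}(X_j)=\pm Y_{s(j)}$; since $p=2$ the sign is irrelevant and $F^{e+1}(X_j)=V^{e+1}(X_j)=Y_{s(j)}\neq 0$, an element of the socle as $s(j)\in I$. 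The $2(e+1)$ vectors $X_j,\dots,F^eX_j,VX_j,\dots,V^eX_j$ together with their common image $Y_{s(j)}$ are independent, so $\EE X_j\cong \EE/\EE(V^{e+1}+F^{e+1})$; and because $s(j)=t(j)=j$ we have $\iota(j)=j$, so by Proposition~\ref{Palg} this cyclic module is a direct summand of $D_{m,0}$.

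For the converse, Proposition~\ref{Palg} shows that the relations only couple $X_j$ to $X_{\iota(j)}$, so $D_{m,0}$ decomposes as a direct sum of $\EE$-modules indexed by the cycles of $\iota$. A fixed point $j=\iota(j)$ gives the cyclic summand $\EE X_j$ with the single relation $F^{e(j)+1}X_j=V^{\epsilon(j)+1}X_j$, whereas a cycle of length $r$ gives a summand with $r$ linearly independent top generators $X_{j_1},\dots,X_{j_r}$ (each $j_i\in I$ has $2j_i>q_0$, so no $X_{j_i}$ lies in $FD_{m,0}+VD_{m,0}$), hence is non-cyclic when $r\ge 2$. By Krull--Schmidt a copy of $\EE/\EE(V^{e+1}+F^{e+1})$ among the summands must therefore come from a fixed point, whose relation is then symmetric, i.e.\ $e(j)=\epsilon(j)=e$; feeding $\epsilon=e$ into the fixed-point equation $t(j)=s(j)$ yields $m(j)=\ell(j)$ and hence $2^{m+1}+1=\ell(2^{e+1}+1)$, the divisibility, and so the congruence. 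The step I expect to be most delicate is exactly this structural input: proving that the summand attached to a cycle of length $\ge 2$ is indecomposable and non-cyclic, so that no such cycle secretly contributes a cyclic summand $\EE/\EE(V^{e+1}+F^{e+1})$. This is where I would concentrate, checking the claim against the explicit module, for instance the length-$3$ cycle $(5\,7\,6)$ at $m=3$, whose summand has three top generators and three-dimensional socle and in fact contains no symmetric cyclic submodule at all.

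Finally, parts (1)--(3) are evaluations of the congruence. For $e=m$ it reads $2^m\equiv 2^m \bmod 2^{m+1}+1$, true for all $m$, giving~(1). For $e=0$ it reads $2^m\equiv 1 \bmod 3$; since $2^m\bmod 3$ has period $2$ this holds exactly when $m$ is even, giving~(2). For $e=1$ it reads $2^m\equiv 2 \bmod 5$; since $2^m\bmod 5$ has period $4$ this holds exactly when $m\equiv 1\bmod 4$, giving~(3).
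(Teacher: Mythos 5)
Your forward direction is correct and complete, and it is essentially the paper's own computation run in reverse: the paper fixes $e$, asks when a standard generator of Proposition~\ref{Palg} can satisfy $(V^{e+1}+F^{e+1})X_j=0$, and solves $s(j)=2^em(j)$ to get $j=(2^{e+1}q_0+2^e)/(2^{e+1}+1)$, integral exactly when $2^m\equiv 2^e\bmod 2^{e+1}+1$; your $j=2^e(2^{m+1}+1)/(2^{e+1}+1)$ is that same value, and your extra checks ($j\in I$, $m(j)=\ell$, $s(j)=t(j)=j$, hence $\iota(j)=j$) are right. You are also correct that ``occurs'' must be read as ``occurs as a direct summand'': under the literal submodule reading the statement is false, since $x=Fy+Vy$ in $\EE/\EE(F^3+V^3)$ satisfies $F^2x=V^2x\neq0$ and generates a copy of $\EE/\EE(F^2+V^2)$, so $\EE/\EE(F^2+V^2)$ embeds in $D_{2,0}\simeq\EE/\EE(F+V)\oplus\EE/\EE(F^3+V^3)$ even though $2^2\not\equiv 2\bmod 5$. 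For comparison, the paper never confronts any of this: its proof treats ``occurs'' as ``one of the defining relations of Proposition~\ref{Palg} has the symmetric form'' and stops at the integrality computation; so your framework for the converse is more honest than the paper's, but it is also exactly where your gap sits.

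The gap is the step you flag yourself, and the repair you propose aims at the wrong statement. Indecomposability of a cycle summand is not a formal consequence of the cycle having length $r\geq 2$: in the semilinear setting such ``band''-shaped modules can decompose. Concretely, the abstract $\EE$-module with generators $X_1,X_2$ and relations $FX_1=VX_2$, $FX_2=VX_1$ contains, for every $\alpha\in\FF_{16}^{*}$, the element $X'=\alpha X_1+\alpha^4X_2$ with $FX'=VX'\neq 0$ (use $F(\lambda u)=\lambda^2F(u)$ and $V(\lambda u)=\lambda^{1/2}V(u)$), and two choices of $\alpha$ with distinct cubes split it as $(\EE/\EE(F+V))^2$. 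So a non-cyclic cycle summand can in principle carry a symmetric cyclic direct summand, and Krull--Schmidt plus ``non-cyclic'' cannot rule this out; any proof of your ``structural input'' must use the arithmetic of $\iota$, not just the cycle length. The statement you actually need is narrower and provable: by the classification of such $\EE$-modules into word modules (Kraft; this is what underlies the Ekedahl--Oort formalism), a cycle summand has $\EE/\EE(F^{e+1}+V^{e+1})$ as a direct summand only if every relation around the cycle is symmetric with the same exponent, i.e.\ $e(j_i)=\epsilon(j_i)=e$ for all $i$. That case is then killed by the same arithmetic as in your forward direction: with all exponents equal to $e$, the coupling $2^em(j_{i+1})=s(j_i)$ becomes $j_{i+1}=A+j_i/4^{e+1}$ for a constant $A$, an affine contraction whose only periodic point is its fixed point, so $r=1$ and you are back in the fixed-point case, which yields the congruence. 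Substituting this for your delicate step closes the converse; parts (1)--(3) are then the routine evaluations you give.
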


\begin{proof}
Let $e \in {\mathbb Z}^{\geq 0}$.
By Proposition \ref{Palg}, 
the relation $(V^{e+1} + F^{e+1})X_j=0$ is only possible if $j=2^e \ell$ where $\ell$ is odd.
Write $s(j)=c-(\ell-1)/2$.  Then $F^{e+1}(X_j)=F(X_\ell)=Y_{s(j)}$.
Now $V(X_j)=-Y_{m(j)}$ where $m(j)=2c-2j+1$.
Also $V^{e+1}(X_j)=2^em(j)$.
Thus we need $s(j) = 2^e m(j)$.
This is equivalent to $2^{e+1} c - (j - 2^e) = 2^{2e+1}(2c-2j+1)$, which is equivalent to 
\[j=\frac{c2^{e+1}(2^{e+1}-1) + 2^e(2^{2e+1}-1)}{2^{2e+2}-1} = \frac{c 2^{e+1} + 2^e}{2^{e+1} + 1}.\]
This value of $j$ is integral if and only if $c \equiv 2^e \bmod 2^{e+1} + 1$.
Thus, the relation $(V^{e+1} + F^{e+1})X_j=0$ occurs if and only if 
$2^m \equiv 2^e \bmod 2^{e+1} +1$ and $j=(2^{e+1} q_0 + 2^e)/(2^{e+1} +1)$.
In particular, one checks that:
\begin{enumerate}
\item $(V^{m+1} + F^{m+1})X_{2^m}=0$;
\item the relation $(V+F)X_j=0$ occurs if and only if $m$ is even and $j=(2 \cdot 2^{m} +1)/3$;
\item the relation $(V^2+F^2)X_j=0$ occurs if and only if $m \equiv 1 \bmod 4$ and $j=(4 \cdot 2^m+2)/5$.
\end{enumerate}
\end{proof}

As a corollary, we determine cases when the $\EE$-module ${\mathbb E}/\EE(V^{e+1} + F^{e+1})$ appears in $D_m$.

\begin{corollary} \label{maincor}
If $2^m \equiv 2^e \bmod 2^{e+1} +1$, then the ${\mathbb E}$-module ${\mathbb E}/\EE(V^{e+1} + F^{e+1})$
occurs as an ${\mathbb E}$-submodule of the mod $2$ Dieudonn\'e module $D_m$ of ${\mathcal S}_m$.
In particular, 
\begin{enumerate}
\item ${\mathbb E}/\EE(V^{m+1} + F^{m+1})$ occurs as an ${\mathbb E}$-submodule of $D_m$ for all $m$;

\item ${\mathbb E}/\EE(V+F)$ occurs as an ${\mathbb E}$-submodule of $D_m$ if $m$ is even; and

\item ${\mathbb E}/\EE(V^2+F^2)$ occurs as an ${\mathbb E}$-submodule of $D_m$ if $m \equiv 1 \bmod 4$.
\end{enumerate}
\end{corollary}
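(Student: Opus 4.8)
The plan is to deduce the corollary immediately from Proposition~\ref{W0structure} together with the $\EE$-module decomposition of $D_m$ recorded in Section~\ref{Sresults}. Recall that the chosen automorphism $\tau \in {\rm Sz}(q)$ of order $q-1$ acts on $D_m$, and that $F$ and $V$ commute with $\tau$; hence both operators stabilize each $\tau$-eigenspace, yielding the $\EE$-module decomposition $D_m = D_{m,0} \oplus D_{m, \not = 0}$. In particular $D_{m,0}$ is a direct summand, hence an $\EE$-submodule, of $D_m$, so every $\EE$-submodule of $D_{m,0}$ is automatically an $\EE$-submodule of $D_m$.

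Given this, the first step is simply to invoke Proposition~\ref{W0structure}: under the hypothesis $2^m \equiv 2^e \bmod 2^{e+1}+1$, that result produces $\EE/\EE(V^{e+1}+F^{e+1})$ as an $\EE$-submodule of $D_{m,0}$. Composing the inclusion $\EE/\EE(V^{e+1}+F^{e+1}) \hookrightarrow D_{m,0}$ with the inclusion $D_{m,0}\hookrightarrow D_m$ (a submodule of a submodule is a submodule) gives the general claim. The three numbered cases then follow by specializing $e$: taking $e=m$ gives part~(1), since $2^m \equiv 2^m \bmod 2^{m+1}+1$ holds for every $m$; taking $e=0$ gives part~(2), since $2^m \equiv 1 \bmod 3$ exactly when $m$ is even; and taking $e=1$ gives part~(3), since $2^m \equiv 2 \bmod 5$ exactly when $m \equiv 1 \bmod 4$. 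These are precisely the congruence conditions already verified in Proposition~\ref{W0structure}.

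There is no real obstacle here, as the substantive content lies entirely in Proposition~\ref{W0structure}; the only point requiring care is the formal one of confirming that the $\tau$-eigenspace splitting is a decomposition of $\EE$-modules rather than merely of $k$-vector spaces, which is exactly what the commutation of $F$ and $V$ with $\tau$ guarantees.
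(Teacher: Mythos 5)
Your proof is correct and follows the same route as the paper: invoke Proposition~\ref{W0structure} to place $\EE/\EE(V^{e+1}+F^{e+1})$ inside $D_{m,0}$, then use the fact that $D_{m,0}$ is an $\EE$-submodule of $D_m$ via the $\tau$-eigenspace decomposition. Your extra remarks—verifying that $F$ and $V$ commute with $\tau$ so the splitting is one of $\EE$-modules, and checking the congruences for $e=m,0,1$—are correct elaborations of details the paper leaves implicit or delegates to Proposition~\ref{W0structure}.
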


\begin{proof}
By Proposition~\ref{W0structure}, ${\mathbb E}/\EE(V^{e+1} + F^{e+1})$
occurs as an ${\mathbb E}$-submodule of the mod $2$ Dieudonn\'e module $D_{m,0}$.
The result follows since $D_{m,0}$ is an $\EE$-submodule of $D_m$.
\end{proof}

\subsection{The nontrivial eigenspaces} \label{Snontrivial}



Recall that $D_{m, \not = 0}$ is the direct sum of the non-trivial eigenspaces for $\tau$.
Consider the canonical filtration of $D_{m, \not =0}$, which is
the smallest filtration stabilized under the action of $F^{-1}$ and $V$; denote it by 
\[0 = N_0 \subset N_1 \subset \cdots N_t=N.\]

By \cite[Chapter 2]{O:strat} (see also \cite[Section 2.2]{duursmaskabelund}), 
the blocks $B_i=N_{i+1}/N_{i}$ in the canonical filtration are representations for $\Hdr$.
On each block $B_i$, either (i) $V|_{B_i}=0$ in which case $B_i \subset {\rm Im}(F)$ and 
$F^{-1}: B_i^{(p)} \to B_j$ is an isomorphism to another block with index $j > i$; 
or (ii) $V: B_i^{(p)} \to B_j$ is an isomorphism to another block with index $j < i$.  
This action of $V$ and $F^{-1}$ yields a permutation $\pi$ of the set of blocks $B_i$.
Cycles in the permutation are in bijection with orbits ${\mathcal O}$ of the blocks under the action of $V$ and $F^{-1}$.

Fix an orbit ${\mathcal O}$ of a block $B_i$ under the action of $F^{-1}$ and $V$.
As in \cite[Section 5.2]{PW12}, this yields a word $w$ in $F^{-1}$ and $V$. 
From this, we produce a symmetric $\EE$-module $\EE(w)$ whose dimension over $k$ is the length of $w$.
Then $\EE(w)$ is an isotypic component of $D_{m,0}$.
The multiplicity of $\EE(w)$ in $D_{m, \not = 0}$ is the dimension of the block $B_i$ in ${\mathcal O}$.

By Corollary \ref{Theoremrep}, the representations occurring in $\Hdr$ are the representations in $W_\pm$,
namely the representations $V_I$ for $I$ a good subset of $N=\ZZ/(2m+1)\ZZ$.  

We now explain the motivation for Conjecture \ref{Ceasyword}.
Let $I_m=\{0, \ldots, m-1\}$.  The smallest power of $F$ that stabilizes $I_m$ is $2m+1$.
Consider the $2$-modular representation of $\rm{Sz}(q)$ given by $W_m=\oplus_{i=0}^{2m} F^i(V_{I_m})$.
For example, $W_1=V_0 \oplus V_1 \oplus V_2$ and
\[W_2=(V_0 \otimes V_1) \oplus (V_1 \otimes V_2) \oplus (V_2 \otimes V_3) \oplus (V_3 \otimes V_4) \oplus (V_4 \otimes V_0).\]

By definition, $W_m$ is an $\FF_2 {\rm Sz}(q)$-module of dimension $(2m+1)4^m$.
By Corollary~\ref{Theoremrep}, the $2$-modular representation $W_m$ appears with multiplicity $2$ in $\Hdr$.
Consider the $\EE$-module $\EE/\EE(F^{2m+1}+V^{2m+1})$; it has dimension $2(2m+1)$ over $k$.

The idea behind Conjecture \ref{Ceasyword} is that the subrepresentation $W_m^2$ of $\Hdr$
should correspond to a submodule of $D_{m}$ with structure $(\EE/\EE(F^{2m+1}+V^{2m+1}))^{4^m}$.
More precisely, Conjecture \ref{Ceasyword} would follow from the claims
that there is a unique $i$ such that $V_{I_m}$ is a subrepresentation of $B_i$, 
that $B_i$ is irreducible and thus equal to $V_{I_m}$, 
and that the word $w$ on the orbit of $B_i$ is $(F^{-1})^{2m+1}V^{2m+1}$.


\section{An Explicit Basis for the de Rham cohomology}\label{Sexplicitbasis}

In this section, we compute an explicit basis for $H^1_{{\rm dR}}(\mathcal{S}_m)$ for all $m$.
This material is needed to determine the mod 2 Dieudonn\'e module of $\mathcal{S}_m$ when $m=1$ and $m=2$ 
in Propositions \ref{m=1} and \ref{m=2}.
We determine the action of $F$ and $V$ on the basis elements explicitly here when $m=1$ and using Magma \cite{Magma}
when $m=2$. 

\subsection{Preliminaries}

Consider the affine equation
$z^q+z=y^{q_0}(y^q+y)$ for $\mathcal{S}_m$. 
Let $P_\infty$ be the point at infinity on ${\mathcal S}_m$.
Let $P_{(y,z)}$ denote the point $(y,z)$ on ${\mathcal S}_m$.

Define the functions $h_1,h_2\in\mathbb{F}_2\left(\mathcal{S}_m\right)$ by:
\[h_1:=z^{2q_0} + y^{2q_0+1}, \ h_2:=z^{2q_0}y+h_1^{2q_0}.\]

\begin{lemma} \label{valuations}
\begin{enumerate}
\item The function $y$ has divisor
\[{\rm div}(y)=\sum_{z\in\mathbb{F}_{q}} P_{(0,z)} -qP_{\infty}.\]

\item The function $z$ has divisor
\[{\rm div}(z)=\sum_{y\in\mathbb{F}_{q}^{\times} }P_{(y,0)} +(q_0+1)P_{(0,0)}-(q+q_0)P_{\infty}.\]

\item Let $S=\left\{(y,z)\in\mathbb{F}_q^2: y^{2q_0+1}=z^{2q_0}, (y,z)\neq(0,0)\right\}$.  The function $h_1$ has divisor
\[{\rm div}(h_1)=\sum_{(y,z)\in S}P_{(y,z)} +(2q_0+1)P_{(0,0)}-(q+2q_0)P_{\infty}.\]
\item The function $h_2$ has divisor
\[{\rm div}(h_2)=(q+2q_0+1)(P_{(0,0)}-P_{\infty}).\]
\end{enumerate}
\end{lemma}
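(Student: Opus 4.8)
The plan is to compute each divisor by a combination of analyzing the affine zeros of the function directly from the defining equation and then determining the pole order at $P_\infty$ via the condition that principal divisors have degree zero. The key structural fact I would exploit throughout is that $P_\infty$ is the unique point at infinity and that $\mathcal{S}_m$ is a smooth curve, so I can read off local behavior at affine points from the equation $z^q+z=y^{q_0}(y^q+y)$ and use degree-zero balancing to pin down the remaining pole order at infinity.

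For part (1), I would note that $y=0$ forces $z^q+z=0$, i.e.\ $z\in\FF_q$ (the roots of $z^q=z$), giving the $q$ affine zeros $P_{(0,z)}$ for $z\in\FF_q$, each simple since $y$ is a local uniformizer there; the pole at $P_\infty$ must then have order $q$ to balance the degree. For part (2), the zeros of $z$ satisfy $y^{q_0}(y^q+y)=0$, so either $y=0$ (forcing $z=0$, giving the point $P_{(0,0)}$) or $y\in\FF_q^\times$ (giving the points $P_{(y,0)}$); the subtle point is computing the multiplicity $q_0+1$ at $P_{(0,0)}$, which comes from comparing $\ord(y)$ and $\ord(z)$ at that point using the equation. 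First I would establish the local ramification data at $P_{(0,0)}$ and at $P_\infty$: since $\mathcal{S}_m/\PP$ via $y$ is totally ramified at $P_\infty$ with $\ord_{P_\infty}(y)=-q$, and the equation shows $z$ has a pole of order $q+q_0$ there, the relation $\ord_{P_\infty}(z)=-(q+q_0)$ follows, and at $P_{(0,0)}$ the equation $z^q+z=y^{q_0+1}+\cdots$ shows $z$ vanishes to order $q_0+1$ in the uniformizer.

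For parts (3) and (4), I would substitute the defining relation into $h_1$ and $h_2$ and track both the affine zero locus and the orders at $P_{(0,0)}$ and $P_\infty$. The set $S$ in part (3) is exactly the affine zero locus of $h_1=z^{2q_0}+y^{2q_0+1}$ away from the origin, and at the origin one computes $\ord_{P_{(0,0)}}(h_1)=2q_0+1$ from the known orders of $y$ and $z$; the pole order $q+2q_0$ at infinity then follows from $\ord_{P_\infty}(h_1)=\min(2q_0\cdot\ord_{P_\infty}(z),(2q_0+1)\ord_{P_\infty}(y))$, which requires checking that no cancellation of leading terms occurs. Part (4) is the cleanest structurally: the claim is that $h_2$ has its only affine zero and pole at $P_{(0,0)}$ and $P_\infty$ respectively, each of order $q+2q_0+1$, which I would verify by showing $h_2=z^{2q_0}y+h_1^{2q_0}$ has no zeros in $S$ or elsewhere in the affine plane except the origin, using the definition of $h_1$ to cancel the generic affine zeros.

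The main obstacle I anticipate is the bookkeeping of orders at $P_{(0,0)}$ and the verification that the leading terms in $h_1$ and $h_2$ do not cancel at $P_\infty$, since these functions are built so that cancellation \emph{does} occur at affine points (that is the point of the construction), and I must be careful to confirm it does \emph{not} occur at infinity. Concretely, the delicate computation is determining a local uniformizer at $P_{(0,0)}$ and expressing $y$ and $z$ as power series in it to read off the exact vanishing orders $q_0+1$ for $z$ and $2q_0+1$ for $h_1$; once these local orders and the orders at $P_\infty$ are secured, each divisor identity reduces to checking that the listed zeros and poles account for a degree-zero principal divisor, which is then automatic.
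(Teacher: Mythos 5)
Parts (1) and (2) of your plan are correct, and your treatment of the zeros of $h_1,h_2$ at $P_{(0,0)}$ is also right. The genuine gap is in your computation of the pole orders at $P_\infty$ in parts (3) and (4) --- which is exactly the step the paper does not redo but outsources to \cite[Proposition~1.3]{HansenStich}. You propose $\ord_{P_\infty}(h_1)=\min\bigl(2q_0\,\ord_{P_\infty}(z),\,(2q_0+1)\,\ord_{P_\infty}(y)\bigr)$ ``after checking that no cancellation of leading terms occurs.'' This fails twice. First, with $\ord_{P_\infty}(y)=-q$, $\ord_{P_\infty}(z)=-(q+q_0)$ and $2q_0^2=q$, the two candidate orders are equal: $2q_0(q+q_0)=(2q_0+1)q=q(2q_0+1)$, so the ultrametric equality criterion does not apply at all. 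Second, the leading terms \emph{do} cancel, massively: the true pole order is $q+2q_0$, far smaller than $q(2q_0+1)$. Indeed your own (correct) zero count forces this: the zeros of $h_1$ have total degree $(q-1)+(2q_0+1)=q+2q_0$ (here $|S|=q-1$, since every point of $\FF_q^2$ lies on the curve and each $y\in\FF_q^\times$ has a unique $2q_0$-th root $z$ of $y^{2q_0+1}$), and $P_\infty$ is the only pole, so its order can only be $q+2q_0$. Thus the ``main obstacle'' you flag is exactly backwards: $h_1$ and $h_2$ are engineered so that cancellation \emph{does} occur at $P_\infty$ --- that is what makes their pole orders $q+2q_0$ and $q+2q_0+1$ small --- and the verification you plan to carry out is impossible. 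The same defect hits part (4): $yz^{2q_0}$ and $h_1^{2q_0}$ both have pole order $2q_0q+2q$ at $P_\infty$, while $h_2$ has pole order only $q+2q_0+1$.

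The missing ingredient is to rewrite $h_1,h_2$ using the curve equation before taking orders at infinity. Raising to the $2q_0$-th power and substituting $z^q=z+y^{q+q_0}+y^{q_0+1}$ gives $h_1^{2q_0}=z^2+y^{2q_0+2}$, hence $h_1^{q_0}=z+y^{q_0+1}$; now the two terms have \emph{distinct} pole orders $q+q_0$ and $q(q_0+1)$, so $q_0\ord_{P_\infty}(h_1)=-q(q_0+1)$, i.e.\ $\ord_{P_\infty}(h_1)=-(q+2q_0)$. Similarly $h_2^{q_0}=y^{q_0}z+h_1$ yields $\ord_{P_\infty}(h_2)=-(q+2q_0+1)$. (These identities also settle points your sketch leaves unproved: that the zero locus of $h_1$ over $\overline{\FF}_2$ is exactly $S\cup\{P_{(0,0)}\}$ --- a priori it could contain non-rational points --- and that each point of $S$ is a \emph{simple} zero; alternatively, simplicity follows from $dh_1=y^{2q_0}\,dy$ being nonzero at affine points with $y\neq 0$, since $y$ is unramified away from $P_\infty$.) If you prefer to keep a degree-balance strategy, you must run it in the one consistent direction: establish all zero multiplicities first and then deduce the pole order at $P_\infty$ from degree zero. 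As written, your independently computed pole order $q(2q_0+1)$ contradicts your computed zero degree $q+2q_0$; that contradiction is the error, not a bookkeeping subtlety.
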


\begin{proof}
The pole orders of these functions are determined in \cite[Proposition 1.3]{HansenStich}.  The orders of the zeros can be determined using the equation for the curve and the definitions of $h_1$ and $h_2$.
\end{proof}

Let $\mathcal{E}_m$ be the set of $(a,b,c,d) \subset \mathbb{Z}^4$ satisfying
\[\begin{array}{c}
0 \leq a, \quad 0 \leq b \leq 1,\quad 0 \leq c \leq q_0-1,\quad 0 \leq d \leq q_0-1,\\
 aq+b(q+q_0)+c(q+2q_0)+d(q+2q_0+1) \leq 2g-2.
\end{array}\]

\begin{lemma} \label{basisofHomega}
The following set is a basis of $H^0(\mathcal{S}_m, \Omega^1)$:
\[
\mathcal{B}_m:=\left\{g_{a,b,c,d}:=y^a z^b h_1^c h_2^d \, dy \, \mid \, (a,b,c,d) \in \mathcal{E}_m \right\}.
\]
\end{lemma}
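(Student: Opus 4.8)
The plan is to show that the proposed set $\mathcal{B}_m$ is a basis for the space of regular differentials $H^0(\mathcal{S}_m, \Omega^1)$, which has dimension equal to the genus $g_m = q_0(q-1)$. Since a basis is determined by linear independence together with a cardinality count matching the dimension, I would split the proof into two parts: first, verifying that each $g_{a,b,c,d}$ is indeed a holomorphic differential (i.e., has no poles), and second, establishing that these differentials are linearly independent and that $|\mathcal{E}_m| = g_m$.

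First I would analyze the divisor of each candidate differential. The differential $dy$ has a known divisor on $\mathcal{S}_m$; since $y$ is a function with a pole only at $P_\infty$ (of order $q$ by Lemma~\ref{valuations}(1)) and the curve is a Kummer-like cover, the divisor of $dy$ can be computed from the ramification data—I expect $dy$ to be regular away from $P_\infty$ and to contribute a pole or zero at $P_\infty$ whose order I would pin down using $2g_m - 2 = \deg(\di(dy))$. Combining this with the divisors of $y$, $z$, $h_1$, $h_2$ from Lemma~\ref{valuations}, the divisor of $g_{a,b,c,d}$ is supported at $P_\infty$, $P_{(0,0)}$, and the points in the various zero sets. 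The key point is that $y, z, h_1, h_2$ all have their only poles at $P_\infty$, with pole orders $q$, $q+q_0$, $q+2q_0$, $q+2q_0+1$ respectively, so the pole order of $g_{a,b,c,d}$ at $P_\infty$ is governed by the linear form $aq + b(q+q_0) + c(q+2q_0) + d(q+2q_0+1)$ minus the order of the zero that $dy$ contributes at $P_\infty$. The defining inequality of $\mathcal{E}_m$, namely that this linear form is at most $2g-2$, is exactly the condition ensuring the differential is regular at $P_\infty$; regularity at $P_{(0,0)}$ and elsewhere should follow from the nonnegativity of the remaining coefficients and the zero orders recorded in Lemma~\ref{valuations}.

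For linear independence, I would exploit that the monomials $y^a z^b h_1^c h_2^d$ have distinct pole orders at $P_\infty$. Because $b, c, d$ are constrained to bounded ranges ($0 \le b \le 1$, $0 \le c, d \le q_0 - 1$) while $a$ is free, the four pole-order contributions $q, q+q_0, q+2q_0, q+2q_0+1$ are arranged so that the map $(a,b,c,d) \mapsto aq + b(q+q_0) + c(q+2q_0) + d(q+2q_0+1)$ is injective on $\mathcal{E}_m$—this is a base-representation argument, since the bounds on $b,c,d$ prevent carrying between the "digits." Distinct pole orders at a single point force linear independence of the corresponding differentials.

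The main obstacle will be the exact cardinality count $|\mathcal{E}_m| = g_m = q_0(q-1)$. I would count lattice points $(a,b,c,d)$ satisfying the bounds and the inequality; for each fixed $(b,c,d)$ in its allowed box, the number of valid $a \ge 0$ is determined by $\lfloor (2g-2 - b(q+q_0) - c(q+2q_0) - d(q+2q_0+1))/q \rfloor + 1$, and summing this over the $2 q_0^2$ choices of $(b,c,d)$ must collapse to exactly $q_0(q-1)$. This summation is the delicate combinatorial heart of the argument, and I expect it to hinge on carefully tracking how the floor functions interact with $q = 2q_0^2$ and $2g - 2 = 2q_0(q-1) - 2$; alternatively, I would sidestep the direct count by invoking that $\mathcal{B}_m$ is contained in the regular differentials and independent, then separately verifying the count is forced to equal $g_m$ by comparing with the Riemann–Roch dimension. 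Establishing that the inequality defining $\mathcal{E}_m$ produces precisely $g_m$ solutions, rather than fewer, is where the proof must be most careful.
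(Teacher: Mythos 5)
Your skeleton is the right one, and it matches the argument the paper itself relies on: the paper's entire proof is the citation \cite[Proposition 3.7]{FGMPW}, whose method is exactly your steps (regularity, plus distinct pole orders at $P_\infty$, plus a cardinality count). Your first two steps are essentially sound. Since $\partial_z\bigl(z^q+z-y^{q_0}(y^q+y)\bigr)=1$, the degree-$q$ projection $(y,z)\mapsto y$ is \'etale away from $P_\infty$ (the cover is Artin--Schreier, not Kummer, but that only helps you), so $dy$ has no zero or pole outside $P_\infty$ and by degree $\di(dy)=(2g_m-2)P_\infty$; regularity of $g_{a,b,c,d}$ away from $P_\infty$ then follows from $a,b,c,d\geq 0$, and regularity at $P_\infty$ is exactly the inequality defining $\mathcal{E}_m$. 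Your injectivity claim also checks out: reducing $aq+b(q+q_0)+c(q+2q_0)+d(q+2q_0+1)$ mod $q_0$ recovers $d$, then dividing by $q_0$ and reducing mod $2q_0$ recovers $b+2c$, whose parity gives $b$, hence $c$, hence $a$; so the elements of $\mathcal{B}_m$ have pairwise distinct vanishing orders at $P_\infty$ and are independent.

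The genuine gap is the third step, which you correctly identify as the heart and then do not carry out: nothing in your write-up establishes $|\mathcal{E}_m|=g_m$, and your proposed sidestep is circular. Independence of $\mathcal{B}_m$ inside the $g_m$-dimensional space $H^0(\mathcal{S}_m,\Omega^1)$ gives only $|\mathcal{E}_m|\leq g_m$; comparing with the Riemann--Roch dimension cannot force equality, because a proper independent subset satisfies everything you have proved up to that point. You must either do the lattice-point count or prove spanning, and the clean route is the latter. Since $\di(dy)=(2g_m-2)P_\infty$, the map $f\mapsto f\,dy$ identifies $H^0(\mathcal{S}_m,\Omega^1)$ with $L((2g_m-2)P_\infty)$, and $\dim L((2g_m-2)P_\infty)$ equals the number of Weierstrass non-gaps at $P_\infty$ lying in $[0,2g_m-2]$. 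By \cite{HansenStich} the Weierstrass semigroup at $P_\infty$ is generated by $q$, $q+q_0$, $q+2q_0$, $q+2q_0+1$, and the identities $2(q+q_0)=q+(q+2q_0)$, $q_0(q+2q_0)=(q_0+1)q$, and $q_0(q+2q_0+1)=q_0q+(q+q_0)$ let you rewrite any representation of a semigroup element into one with $0\leq b\leq 1$ and $0\leq c,d\leq q_0-1$ (each rewrite preserves the value and strictly decreases a positive quantity, so the process terminates). Hence every non-gap in $[0,2g_m-2]$ is the pole order of exactly one monomial indexed by $\mathcal{E}_m$, so $\mathcal{B}_m$ spans $L((2g_m-2)P_\infty)\,dy$ and is a basis, with $|\mathcal{E}_m|=g_m$ falling out as a byproduct rather than being the input. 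Without this (or the explicit floor-function summation you deferred), the proposal proves only that $\mathcal{B}_m$ is an independent set of regular differentials.
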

\begin{proof}
See \cite[Proposition 3.7]{FGMPW}.
\end{proof}

A basis for $H^1(\mathcal{S}_m,\mathcal{O})$ can be built similarly.
Define the map
\[\pi:\mathcal{S}_m \rightarrow \mathbb{P}^1_y, \ 
(y,z) \mapsto y, \ \ P_{\infty} \mapsto\infty_y.\]
Let $0_y$ be the point on $\mathbb{P}^1_y$ with $y=0$.  
Then $\pi^{-1}(0_y) = \left\{(0,z):z\in\mathbb{F}_q\right\}$ has cardinality $q$.

\begin{lemma} \label{basisofHO}
The following set represents a basis of $H^1(\mathcal{S}_m,\mathcal{O})$:
\[
\mathcal{A}_m:=\left\{f_{a,b,c,d}:=\frac{1}{y^a z^b h_1^c h_2^d} \frac{z h_1^{q_0-1} h_2^{q_0-1}}{y}\, \mid \,(a,b,c,d) \in\mathcal{E}_m\right\}.
\]
\end{lemma}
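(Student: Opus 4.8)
The plan is to compute $H^1(\mathcal{S}_m,\mathcal{O})$ via \v{C}ech cohomology for a two--set affine cover adapted to the map $\pi$, to check that each $f_{a,b,c,d}$ represents a cohomology class, and then to prove that the $f_{a,b,c,d}$ are a basis by pairing $\mathcal{A}_m$ against the known basis $\mathcal{B}_m$ of $H^0(\mathcal{S}_m,\Omega^1)$ under Serre duality. First I would set $U=\mathcal{S}_m\setminus\{P_\infty\}$ and $V=\mathcal{S}_m\setminus\pi^{-1}(0_y)$. Since $\pi^{-1}(\infty_y)=\{P_\infty\}$ and $\pi^{-1}(0_y)=\{P_{(0,z)}\mid z\in\FF_q\}$ are disjoint finite sets, both $U$ and $V$ are affine and $\{U,V\}$ covers $\mathcal{S}_m$; hence
\[
H^1(\mathcal{S}_m,\mathcal{O})\cong \frac{\mathcal{O}(U\cap V)}{\mathcal{O}(U)+\mathcal{O}(V)},
\]
where $\mathcal{O}(U)$ is the space of functions with poles only at $P_\infty$ and $\mathcal{O}(V)$ the space of functions with poles only over $0_y$.

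Using $0\le b\le 1$ and $0\le c,d\le q_0-1$, I would rewrite $f_{a,b,c,d}=y^{-(a+1)}z^{1-b}h_1^{q_0-1-c}h_2^{q_0-1-d}$. Because the exponents of $z,h_1,h_2$ are now all nonnegative, Lemma~\ref{valuations} shows that the only poles of $f_{a,b,c,d}$ lie at $P_\infty$ (from the numerator) and over $0_y$ (from the factor $y^{-(a+1)}$); thus $f_{a,b,c,d}\in\mathcal{O}(U\cap V)$ and defines a class in $H^1(\mathcal{S}_m,\mathcal{O})$. Since $\mathcal{B}_m$ is a basis of $H^0(\mathcal{S}_m,\Omega^1)$ (Lemma~\ref{basisofHomega}), which has dimension $g=g_m$, the index set $\mathcal{E}_m$ has cardinality $g$; and by Serre duality $\dim H^1(\mathcal{S}_m,\mathcal{O})=g$ as well. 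It therefore suffices to prove that the classes $f_{a,b,c,d}$ are linearly independent.

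To prove independence I would use the Serre duality residue pairing between $H^1(\mathcal{S}_m,\mathcal{O})$ and $H^0(\mathcal{S}_m,\Omega^1)$, which is perfect. For $f\in\mathcal{O}(U\cap V)$ and a regular differential $\omega$, the product $f\omega$ has poles only at $P_\infty$ and over $0_y$, so the residue theorem lets me compute the pairing as $\sum_{z\in\FF_q}\res_{P_{(0,z)}}(f\omega)$. Taking $\omega=g_{a',b',c',d'}$ and using that $\pi$ is unramified over $0_y$, so that $y$ is a uniformizer at every $P_{(0,z)}$, I would expand $f_{a,b,c,d}\,g_{a',b',c',d'}$ locally; the contributions of the points with $z\neq 0$ (where $z,h_1,h_2$ are units) assemble into a power sum $\sum_{z\in\FF_q^\times}z^{E}$, which vanishes unless $(q-1)\mid E$, while the point $P_{(0,0)}$ is treated separately. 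This reduces each matrix entry $\langle f_{a,b,c,d},g_{a',b',c',d'}\rangle$ to an explicit congruence condition modulo $q-1$, organized by the exponent $a$ and by the residue class of $b+2q_0 c+4q_0^2 d$ modulo $q-1$.

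The main obstacle is to deduce from these congruence conditions that the Gram matrix is nonsingular, i.e. has full rank $g$. The heart of this is the combinatorial fact that, modulo $q-1=2^{2m+1}-1$ (so $4q_0^2\equiv 2$), the map $(b,c,d)\mapsto b+2d+2q_0 c$ separates the admissible tuples with $0\le b\le 1$ and $0\le c,d\le q_0-1$ up to the single wrap-around between $0$ and $q-1$; this near-bijectivity forces the congruence $(q-1)\mid E$ to single out a unique partner, so that the pairing matrix is triangular with a nonvanishing (anti)diagonal once the indices are correctly ordered. I expect the pole-order bookkeeping at $P_\infty$, the separate residue at $P_{(0,0)}$, and this congruence combinatorics modulo $q-1$ to be the only delicate points, the remainder being a direct analogue of the argument establishing $\mathcal{B}_m$ in \cite{FGMPW}. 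Once nonsingularity is proved, $\mathcal{A}_m$ is linearly independent and, having cardinality $g$, forms a basis of $H^1(\mathcal{S}_m,\mathcal{O})$.
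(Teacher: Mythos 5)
Your setup coincides with the paper's: the same two-set affine cover $\{U_{\infty},U_0\}$, the same use of Lemma~\ref{valuations} to check that each $f_{a,b,c,d}=y^{-(a+1)}z^{1-b}h_1^{q_0-1-c}h_2^{q_0-1-d}$ lies in $\mathcal{O}(U_{\infty}\cap U_0)$, and the same count $|\mathcal{E}_m|=g_m=\dim H^1(\mathcal{S}_m,\mathcal{O})$. The divergence is in the independence step, and that is where your proposal has a genuine gap. The paper argues via the pole orders of the $f_{a,b,c,d}$ at $P_{\infty}$, which are pairwise distinct by the calculation of \cite[Proposition 3.7]{FGMPW}; you instead propose to show that the Gram matrix $\bigl(\langle f_{a,b,c,d},\,g_{a',b',c',d'}\rangle\bigr)$ of the residue pairing is nonsingular, computing the pairing as $\sum_{z\in\FF_q}\res_{P_{(0,z)}}(f\omega)$. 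But the nonsingularity --- which you yourself call the heart of the proof --- is never established: it rests on an unproven ``near-bijectivity'' claim modulo $q-1$, stated only as an expectation. Moreover, the reduction to a single power sum $\sum_{z\in\FF_q^{\times}}z^{E}$ is not correct as written. The residue of $f_{a,b,c,d}\,g_{a',b',c',d'}$ at $P_{(0,z_0)}$ is the coefficient of $y^{a-a'}$ in the local expansion of $z^{1-b+b'}h_1^{q_0-1-c+c'}h_2^{q_0-1-d+d'}$ in the uniformizer $y$; only when $a=a'$ is this the constant term, a monomial in $z_0$. When $a\neq a'$, the higher coefficients of the expansions of $z$, $h_1$, $h_2$ at $P_{(0,z_0)}$ enter, each residue is a genuine polynomial in $z_0$, and the vanishing/nonvanishing of the column sums is no longer governed by one congruence class of one exponent $E$. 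So the combinatorial scheme you describe does not get off the ground in the stated form.

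There is, however, a clean repair that keeps your Serre-duality strategy and in fact only needs well-definedness of the pairing, not perfectness: use the residue theorem in the opposite direction and compute the pairing as $\res_{P_{\infty}}(f_{a,b,c,d}\,g_{a',b',c',d'})$ (the two computations agree since $p=2$). Writing $S=aq+b(q+q_0)+c(q+2q_0)+d(q+2q_0+1)$ for the row index and $S'$ for the analogous sum for the column index, the paper's own valuation computation gives $v_{P_{\infty}}(f_{a,b,c,d})=S-(2g_m-1)$, while ${\rm div}(dy)=(2g_m-2)P_{\infty}$ (the cover $\pi$ is unramified away from $P_{\infty}$) gives $v_{P_{\infty}}(g_{a',b',c',d'})=(2g_m-2)-S'$. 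Hence $v_{P_{\infty}}(f_{a,b,c,d}\,g_{a',b',c',d'})=S-S'-1$, so the matrix entry vanishes whenever $S>S'$ and is nonzero whenever $S=S'$ (valuation exactly $-1$). Since the values of $S$ on $\mathcal{E}_m$ are pairwise distinct by \cite[Proposition 3.7]{FGMPW}, ordering both bases by $S$ makes the Gram matrix triangular with nonzero diagonal, hence nonsingular. Note that this repaired argument is driven entirely by the distinct pole orders at $P_{\infty}$, i.e., it is in substance the paper's proof; the arithmetic at the fiber $\pi^{-1}(0_y)$, which is where your proposal bogs down, is never needed.
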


\begin{proof}
Let $U_{\infty}=\mathcal{S}_m\setminus\pi^{-1}(\infty_y)=\mathcal{S}_m\setminus P_{\infty}$ and $U_0=\mathcal{S}_m\setminus\pi^{-1}(0_y)$.  
The elements of $H^1(\mathcal{S}_m,\mathcal{O})$ can be represented by classes of
functions that are regular on $U_{\infty} \cap U_0$, but are not regular on $U_{\infty}$ or regular on $U_0$.
In other words, these functions have a pole at $P_{\infty}$ and at some point in $\pi^{-1}(0_y)$.

Let $f=f_{a,b,c,d}$ for some $(a,b,c,d)\in\mathcal{E}_m$.  Then $f$ has poles only in $\left\{P_{\infty}, \pi^{-1}(0_y)\right\}$ by Lemma~\ref{valuations}. Let $Q=(0,\alpha)$ for some $\alpha\in\mathbb{F}_q^{\times}$.  Then
$v_{Q}(f)=-(a+1)\leq-1$.  Also,
\begin{eqnarray*}
v_{P_{\infty}}(f)&=&(a+1)(q)-(1-b)(q+q_0)-(q_0-1-c)(q+2q_0)-(q_0-1-d)(q+2q_0+1)\\
&=&aq+b(q+q_0)+c(q+2q_0)+d(q+2q_0+1) + (2q_0-2q_0q+1)\\
&\leq&2g_m-2+ (2q_0-2q_0q+1)\\
&=&(2q_0q-2q_0-2)+(2q_0-2q_0q+1)\\
&=&-1.
\end{eqnarray*}

So $f$ is regular on $U_{\infty}\cap U_0$ but not on $U_{\infty}$ or $U_0$.
By a calculation similar to \cite[Proposition 3.7]{FGMPW}, 
the elements of $\mathcal{A}_m$ are independent because each element has a different pole order at $P_{\infty}$.
The cardinality of $\mathcal{A}_m$ is $g_m={\rm dim}(H^1(\mathcal{S}_m,\mathcal{O}))$.  Thus $\mathcal{A}$ is a basis for $H^1(\mathcal{S}_m,\mathcal{O})$.

\end{proof}

\subsection{Constructing the de Rham cohomology}

 Let $\mathcal{U}$ be the open cover of $\mathcal{S}_m$ given by $U_{\infty}$ and $U_{0}$ 
 from the proof of Lemma \ref{basisofHO}.
 For a sheaf $\mathcal{F}$ on $\mathcal{S}_m$, let
\begin{eqnarray*}
C^0(\mathcal{U},\mathcal{F})&:=& \left\{g=(g_{\infty},g_0) \mid g_i\in\Gamma(U_i,\mathcal{F})\right\},\\
C^1(\mathcal{U},\mathcal{F})&:=&\left\{\phi\in\Gamma(U_{\infty}\cap U_0,\mathcal{F})\right\}.
\end{eqnarray*}

Define the coboundary operator $\delta:\mathcal{C}^0(\mathcal{U},\mathcal{F})\rightarrow \mathcal{C}^1(\mathcal{U},\mathcal{F})$ by $\delta g = g_{\infty}-g_0$.
The closed de Rham cocycles are the set
\[Z^1_{\rm dR}\left(\mathcal{U}\right):=\left\{(f,g)\in\mathcal{C}^1(\mathcal{U},\mathcal{O})\times\mathcal{C}^0(\mathcal{U},\Omega^1): df = \delta g\right\}.\]
The de Rham coboundaries are the set
\[B^1_{\rm dR}(\mathcal{S}_m):=\left\{(\delta\kappa, d\kappa)\in Z^1_{\rm dR}\left(\mathcal{U}\right): \kappa\in\mathcal{C}^0(\mathcal{U},\mathcal{O})\right\},\] where $d\kappa=\left(d(\kappa_0),d(\kappa_{\infty})\right)$. 
The de Rham cohomology $H^1_{\rm dR}(\mathcal{S}_m)$ is
\[H^1_{\rm dR}(\mathcal{S}_m)\cong H^1_{\rm dR}(\mathcal{S}_m)(\mathcal{U}):=Z^1_{\rm dR}\left(\mathcal{U}\right)/B^1_{\rm dR}\left(\mathcal{U}\right).\]

There is an injective homomorphism $\lambda:H^0(\mathcal{S}_m,\Omega^1)\rightarrow H^1_{\rm dR}(\mathcal{S}_m)$ denoted informally by $ g \mapsto (0, \mathbf{g})$, where the second coordinate is a tuple $ {\mathbf g} = ( g_{\infty}, g_0)$ defined by $ g_i= g|_{U_i}$.  Define another homomorphism $\gamma: H^1_{\rm dR}(\mathcal{S}_m)\rightarrow H^1(\mathcal{S}_m,\mathcal{O})$ with $(f, \mathbf{g})\mapsto f$.  These create a short exact sequence
\begin{equation} \label{SES}
0\longrightarrow H^0(\mathcal{S}_m,\Omega^1)\mathrel{\mathop{\longrightarrow}^{\lambda}} H^1_{\rm dR}(\mathcal{S}_m)\mathrel{\mathop{\longrightarrow}^{\gamma}}H^1(\mathcal{S}_m,\mathcal{O})\longrightarrow 0.
\end{equation}

\vspace{.1in}

Let $A$ be a basis for $H^1(\mathcal{S}_m, \mathcal{O})$ and $B$ a basis for $H^0(\mathcal{S}_m,\Omega^1)$.  A basis for $\Hdr$ is then given by $\psi(A)\cup\lambda(B)$, where $\psi$ is defined as follows. 
Given $f\in H^1(\mathcal{S}_m,\mathcal{O})$, one can write $df = df_{\infty}+ df_{0}$, where $ df_{i}\in\Gamma(U_i,\Omega^1)$ for $i\in\{0,\infty\}$. 
For convenience, define $\mathbf{df}=\left(df_{\infty},df_{0}\right)$.
Define a section of \eqref{SES} by:
\[\psi:H^1(\mathcal{S}_m,\mathcal{O})\rightarrow \Hdr, \ \psi(f)=\left(f,\mathbf{df}\right).\]  
The image of $\psi$ is a complement in $\Hdr$ to $\lambda(H^0(\mathcal{S}_m,\Omega^1))$. 

\subsubsection{The Frobenius and Verschiebung operators}

The Frobenius $F$ and Verschiebung $V$ act on $\Hdr$ by
\[F(f,\mathbf{g}):=(f^p,(0,0)) \text{ and }V(f,\mathbf{g}):=(0,\mathscr{C}(\mathbf{g}))\]
where $\mathscr{C}$ is the Cartier operator, which acts componentwise on $\mathbf{g}$. The Cartier operator is defined by the properties that it annihilates exact differentials, preserves logarithmic differentials, and is $p^{-1}$-linear.  It follows from the definitions that
\[\text{ker}(F ) = \lambda(H^0(\mathcal{S}_m, \Omega^1 )) = \text{im}(V ).\]

\subsection{The case $m=1$}

When $m=1$, then $q_0=2$, $q=8$, and $g=14$.  The Suzuki curve ${\mathcal S}_1$ has affine equation
\[z^8+z=y^2(y^8+y).\]

The set $\mathcal{E}_1$ consists of the 14 tuples
\begin{eqnarray*}
\mathcal{E}_1&=&\{(0,0,0,0),(0,0,0,1), (0,0,1,0), (0,0,1,1),(0,1,0,0), (0,1,0,1), (0,1,1,0),(1,0,0,0),(1,0,0,1),\\
& & \hspace{.5in} (1,0,1,0), (1,1,0,0), (2,0,0,0), (2,1,0,0), (3,0,0,0)\}.
\end{eqnarray*}

By Lemmas \ref{basisofHomega} and \ref{basisofHO}, $\mathcal{B}_1$ is a basis for $H^0({\mathcal S}_1,\Omega^1)$ and $\mathcal{A}_1$ is a basis for $H^1({\mathcal S}_1,\mathcal{O})$.  
Based on the action of Frobenius and Verschiebung, the following sets make more convenient bases:

\begin{lemma}\label{new bases}
\begin{enumerate}

\item A basis for $H^1({\mathcal S}_1,\mathcal{O})$ is given by the set
\begin{eqnarray*}
A&=&\{f_{(0,0,0,0)},f_{(2,0,0,0)},f_{(0,1,0,0)}+f_{(3,0,0,0)}, f_{(2,1,0,0)}+f_{(0,0,1,0)},f_{(0,0,0,1)}+f_{(1,0,1,0)},\\
& &  f_{(1,0,0,0)},f_{(2,1,0,0)},f_{(1,0,0,1)}, f_{(0,0,1,1)},f_{(1,0,1,0)},
f_{(3,0,0,0)},f_{(1,1,0,0)},f_{(0,1,1,0)}, f_{(0,1,0,1)}\}.
\end{eqnarray*}

\item A basis for $H^0({\mathcal S}_1,\Omega^1)$ is given by the set
\begin{eqnarray*}
B&=&\{ g_{(0,0,0,0)}, g_{(2,0,0,0)}, g_{(0,1,0,0)}+ g_{(3,0,0,0)},  g_{(2,1,0,0)}+ g_{(0,0,1,0)}, g_{(0,0,0,1)}+ g_{(1,0,1,0)},\\
& & g_{(1,0,0,0)}, g_{(2,1,0,0)}, g_{(1,0,0,1)},  g_{(0,0,1,1)}, g_{(1,0,1,0)}, 
g_{(3,0,0,0)}, g_{(1,1,0,0)}, g_{(0,1,1,0)},  g_{(0,1,0,1)}\}.
\end{eqnarray*}
\end{enumerate}
\end{lemma}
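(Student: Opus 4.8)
The plan is to deduce both claimed bases from the bases already established in Lemmas~\ref{basisofHO} and~\ref{basisofHomega} by exhibiting an explicit change of basis and checking that it is invertible. Since $\mathcal{A}_1 = \{f_{(a,b,c,d)} : (a,b,c,d) \in \mathcal{E}_1\}$ is a basis for $H^1(\mathcal{S}_1, \mathcal{O})$ and $\mathcal{B}_1 = \{g_{(a,b,c,d)} : (a,b,c,d) \in \mathcal{E}_1\}$ is a basis for $H^0(\mathcal{S}_1, \Omega^1)$, each of dimension $g_1 = 14 = |\mathcal{E}_1|$, it suffices to show that $A$ (respectively $B$) has $14$ elements and is obtained from $\mathcal{A}_1$ (respectively $\mathcal{B}_1$) by an invertible linear transformation over $\mathbb{F}_2$.

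First I would observe that $A$ agrees with $\mathcal{A}_1$ in all but three entries: the vectors $f_{(0,1,0,0)}$, $f_{(0,0,1,0)}$, and $f_{(0,0,0,1)}$ are replaced by the sums $f_{(0,1,0,0)}+f_{(3,0,0,0)}$, $f_{(2,1,0,0)}+f_{(0,0,1,0)}$, and $f_{(0,0,0,1)}+f_{(1,0,1,0)}$ respectively. The crucial point to verify is that the second summand appearing in each of these three modifications — namely $f_{(3,0,0,0)}$, $f_{(2,1,0,0)}$, and $f_{(1,0,1,0)}$ — is itself retained, unchanged, as a separate element of $A$. Granting this, each old basis vector lies in the span of $A$: the eleven unmodified vectors are literally members of $A$, while $f_{(0,1,0,0)} = \bigl(f_{(0,1,0,0)}+f_{(3,0,0,0)}\bigr) + f_{(3,0,0,0)}$, and similarly for $f_{(0,0,1,0)}$ and $f_{(0,0,0,1)}$, using that we work in characteristic $2$. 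Hence $A$ spans the $14$-dimensional space $H^1(\mathcal{S}_1,\mathcal{O})$ and, having exactly $14$ elements, is a basis. Equivalently, the matrix expressing $A$ in terms of $\mathcal{A}_1$ is the identity together with three off-diagonal $1$'s, which is unitriangular after a suitable reordering of the indices and therefore has determinant $1$.

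The argument for $B$ is identical, since $B$ differs from $\mathcal{B}_1$ by the same three replacements, now with $g$ in place of $f$; the same unitriangular change-of-basis matrix shows $B$ is a basis for $H^0(\mathcal{S}_1,\Omega^1)$. I do not expect any substantive obstacle: the entire content is the bookkeeping required to confirm that the three added summands genuinely coincide with three of the retained basis elements, which is precisely what makes the change of basis unipotent rather than a general invertible $14\times 14$ matrix whose determinant one would otherwise have to compute directly. The reason for selecting these particular linear combinations — that they streamline the subsequent computation of $F$ and $V$ — is not part of the statement and is borne out in the calculations that follow.
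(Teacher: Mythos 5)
Your proof is correct, but it takes a genuinely different route from the paper's. The paper disposes of both parts in one line by the same mechanism used to establish Lemmas~\ref{basisofHomega} and~\ref{basisofHO}: the fourteen listed elements have pairwise distinct orders at $P_{\infty}$ --- the order of each of the three sums being that of its dominant summand, e.g.\ $f_{(0,1,0,0)}+f_{(3,0,0,0)}$ has pole order $17=\max(17,3)$ --- so they are linearly independent, and a dimension count finishes the argument. You instead never touch valuations: you observe that $A$ (resp.\ $B$) is obtained from the known basis $\mathcal{A}_1$ (resp.\ $\mathcal{B}_1$) by three transvections $v \mapsto v+w$ in which each added vector $w$ (namely $f_{(3,0,0,0)}$, $f_{(2,1,0,0)}$, $f_{(1,0,1,0)}$) is itself retained as a separate element of the new set, and that the three modified indices and three added indices are six distinct indices, so the change-of-basis matrix is unitriangular after reordering. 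Both arguments ultimately rest on Lemmas~\ref{basisofHomega} and~\ref{basisofHO}, and both are complete. Your route is the more elementary and more robust one: it works for any modification of this shape and requires no check that the orders of the sums avoid colliding with the orders of the other elements --- a check the paper's argument does implicitly need, and which succeeds here only because each added summand has strictly smaller pole order than the vector it modifies, so the multiset of pole orders is unchanged. What the paper's route buys is brevity and uniformity: the distinct-order criterion is the single tool behind every basis statement in Section~\ref{Sexplicitbasis}, and it keeps on record the pole orders of the new basis elements, data of the kind used when sorting terms into $\Gamma(U_0)$ and $\Gamma(U_{\infty})$ in the subsequent computations of $F$ and $V$.
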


\begin{proof}
By Lemma~\ref{basisofHomega} (resp.\ \ref{basisofHO}), these $1$-forms (resp.\ functions) have distinct pole orders at $P_{\infty}$, are therefore linearly independent, and thus form a basis of $H^1({\mathcal S}_1,\mathcal{O})$ (resp. \  $H^0({\mathcal S}_1,\Omega^1)$).
\end{proof}

It is now possible to calculate the action of $F$ and $V$ on $\psi(A)\cup\lambda( B)$, a basis for $H^1_{\rm dR}({\mathcal S}_1)$.  

\subsubsection{The action of Frobenius when $m=1$} 

The action of $F$ is summarized in the right column of Table~\ref{Vtable}.
Note that $F(g)=0$ for $g \in B$ since 
$\text{ker}(F)=\text{im}(V)\cong H^0(\mathcal{S}_1, \Omega^1 )$.
For the action of $F$ on $\psi(f)$ for $f \in A$, note that $F(\psi(f))=(f^2,(0,0))$.
Then
\begin{eqnarray*}
f^2=(f_{(a,b,c,d)})^2&=&(y^{-1-a}z^{1-b}h_1^{1-c}h_2^{1-d})^2\\
&=&(y^{-2})^{1+a}(yh_1+h_2)^{1-b}(z+y^3)^{1-c}(h_1+zy^2)^{1-d}.
\end{eqnarray*}

To do these calculations, 
we simplify $f^2$ and write it as a sum of quotients of monomials in $\left\{y,z, h_1,h_2\right\}$.  These monomials can then be classified as belonging to $\Gamma(U_{0})$ or $\Gamma(U_{\infty})$, or can otherwise be rewritten in terms of the basis for $H^1(\mathcal{S}_1, \mathcal{O})$. It is then possible to use coboundaries to write $(f^2,(0,0))$ in terms of the given basis for $H^1_{\rm dR}(\mathcal{S}_1)$.

\begin{example} To compute that $F(\psi(f_{(0,1,0,1)}))=\lambda(g_{(0,0,0,0)}),$ note first that
\[\left(f_{(0,1,0,1)}\right)^2=y^{-2}(z+y^3)=\frac{z}{y^2}+y.\] Also,
\[d\left(\frac{z}{y^2}\right)=\frac{1}{y^2}dz-2\frac{z}{y^3}dy = dy\textrm{ and }d(y)=dy.\]
Since $y\in\Gamma(U_{\infty},\mathcal{O})$ and $\frac{z}{y^2}\in\Gamma(U_{0},\mathcal{O})$, the pair $\left(\frac{z}{y^2},y\right)$ is in $C^0(\mathcal{U},\mathcal{O})$ and $(\frac{z}{y^2}+y,(dy,dy))$ is a coboundary.  Thus 
\[F\left(\psi\left(f_{(0,1,0,1)}\right)\right)=\left(\frac{z}{y^2}+y, (0,0)\right) + \left(\frac{z}{y^2}+y,(dy,dy)\right) = (0,(dy,dy))=\lambda(dy)=(0,\mathbf{g_{(0,0,0,0)}}).\]
\end{example}

\begin{example}
We compute that $F\left(\psi(f_{(0,0,1,1)})\right)=\psi\left(f_{(0,1,0,1)}\right).$  This is true because 
\[\left(f_{(0,0,1,1)}\right)^2=y^{-2}(yh_1+h_2)=\frac{h_1}{y}+\frac{h_2}{y^2}.\]  Note that $\frac{h_2}{y^2}\in\Gamma(U_{0},\mathcal{O})$, so $(\frac{h_2}{y^2},0)\in C^0(\mathcal{U},\mathcal{O})$, and $d\left(\frac{h_2}{y^2}\right)=\frac{z^4}{y^2}dy$.  So $\left(\frac{h_2}{y^2},(\frac{z^4}{y^2}dy,0)\right)$ is a coboundary.  Also, $d\left(\frac{h_1}{y}\right)=\frac{z^4}{y^2}dy$.  Thus
\[F\left(\psi\left(f_{(0,0,1,1)}\right)\right)=  \left(\frac{h_1}{y}+\frac{h_2}{y^2},(0,0)\right)+\left(\frac{h_2}{y^2},\left(\frac{z^4}{y^2}dy,0\right)\right)= \left(\frac{h_1}{y},\left(\frac{z^4}{y^2}dy,0\right)\right)=\psi\left(f_{(0,1,0,1)}\right).\]
\end{example}

\subsubsection{The action of Verschiebung when $m=1$} 

The action of $V$ is summarized in the middle column of Table~\ref{Vtable}.
In \cite{FGMPW}, the authors calculate the action of the Cartier operator $\mathscr{C}$ (see Table~\ref{thetable}).
This determines the action of $V$ on $\lambda(g)$ for $g \in B$.
It also helps determine the action of $V$ on $\psi(f)$ for $f \in A$.

\begin{example} We compute that $V\left( \psi(f_{(0,1,0,1)})\right)=(0,\mathbf{0})$. Writing $f=f_{(0,1,0,1)}=\frac{h_1}{y}=\frac{z^4}{y}+y^4$, then
\[df=\frac{\partial f}{\partial y}dy+\frac{\partial f}{\partial z}dz=\left(-\frac{z^4}{y^2}+4y^3\right)dy+4\frac{z^3}{y}dz=\frac{z^4}{y^2}dy.\]  Considering the pole orders of $y$, $z$, and $dy$, define $df=df_{0}\in\Omega_0$ and $df_{\infty}=0$, so ${\bf{df}}=(0,df)$.  Thus 
$\mathscr{C}(df)=\frac{z^2}{y}\mathscr{C}(dy)=0$.  Thus $\mathscr{C}({\mathbf {df} })=(0,0)={\mathbf 0}$ and $V( \psi(f_{(0,1,0,1)}))=(0,\mathbf{0})$.
\end{example}

\begin{example} We compute that $V\left( \psi(f_{(2,1,0,0)})\right)=(0,{\mathbf g_{(0,1,0,0)}})$.  This is because 
\[f_{(2,1,0,0)}=\frac{h_1h_2}{y^3},\] so
\[df=y^{-3}d(h_1h_2)+y^{-4}h_1h_2dy=y^{-3}h_1d(h_2)+y^{-3}h_2d(h_1)+y^{-4}h_1h_2dy.\]
Then 
\[d(h_1)=d(z^{4}+y^{5})=y^4dy \textrm{ and }d(h_2)=d(z^4y+h_1^4)=z^4dy,\]
so \begin{align*}
df&=y^{-3}z^4h_1dy+yh_2dy+y^{-4}h_1h_2\\
&=y^{-3}h_1(h_1+y^5)dy+yh_2dy+y^{-4}h_1h_2\\
&=\frac{h_1^2}{y^3}dy+\frac{h_1h_2}{y^4}dy+y^2h_1dy+yh_2dy,\\
\end{align*}
using the fact that $z^4=h_1+y^5$.
Considering the orders of the poles, 
define $df_0=\frac{h_1^2}{y^3}dy+\frac{h_1h_2}{y^4}dy\in\Omega_0$ and $df_{\infty}=y^2h_1dy+yh_2dy\in\Omega_{\infty}$.  
Using Table~\ref{thetable} and the fact that $h_1^2=z+y^3$, then 
\begin{align*}
\mathscr{C}(df_{\infty})&=y\mathscr{C}(h_1dy)+\mathscr{C}(yh_2dy)\\
&=y^3dy+h_1^2dy=(y^3+z+y^3)dy=zdy.
 \end{align*}
 Thus $V\left(\psi(f_{(2,1,0,0)})\right)=(0,{\mathbf g_{(0,1,0,0)}})$.
\end{example}

The actions of $F$ and $V$ are summarized in Table~\ref{Vtable}.  
%

\begin{table}[ht]
\caption{Cartier Operator on $H^0({\mathcal S}_1, \Omega^1)$}\label{thetable}
\noindent \[
\begin{array}{|l|l|}
\hline
f&\mathscr{C}(f dy)\\
\hline
\hline
1&0\\
\hline
y&dy\\
\hline
z&y^{q_0/2}\, dy\\
\hline
h_1&y^{q_0}\, dy\\
\hline
h_2&\left((yh_1)^{q_0/2}+h_2\right)\, dy\\
\hline
yz&h_1^{q_0/2}\,dy\\
\hline
yh_1&\left((yh_1)^{q_0/2}+h_2\right)\, dy\\
\hline
zh_1&(yh_2)^{q_0/2}\, dy\\
\hline
zh_2&(h_1h_2)^{q_0/2}\, dy\\
\hline
h_1h_2&\left(h_1+zy^{q_0}\right)\, dy\\
\hline
yzh_1&\left(y^{q_0/2}z+(h_1h_2)^{q_0/2}\right)\, dy\\
\hline
yzh_2&\left(zh_1^{q_0/2}+y^{q_0/2+1}h_2^{q_0/2}\right)\, dy\\
\hline
zh_1h_2&\left(zy^{q_0/2}h_2^{q_0/2}+h_1^{q_0/2+1}\right)\, dy\\
\hline
yh_1h_2&\left((yh_1)^{q_0/2}z+h_2^{q_0/2}z\right)\, dy\\
\hline
yzh_1h_2&\left(y^{q_0/2}h_2+zh_1^{q_0/2}h_2^{q_02}\right)\, dy\\
\hline
\end{array}\]
\end{table}

\begin{table}[ht]
\caption{Action of Verschiebung and Frobenius on $H^1_{\rm dR}({\mathcal S}_1)$}\label{Vtable}
\noindent \[
\begin{array}{|l|l|l|}
\hline
(f,{\mathbf g})&V(f,{\mathbf g}) &F(f,{\mathbf g}) \\
\hline
\hline
(0, {\mathbf g_{(0,0,0,0)}})&(0,\mathbf{0})& (0, \mathbf{0})\\
(0, {\mathbf g_{(2,0,0,0)}})&(0,\mathbf{0})& (0, \mathbf{0})\\
 (0,{\mathbf g_{(0,1,0,0)}+\mathbf g_{(3,0,0,0)}})&(0,\mathbf{0})& (0, \mathbf{0})\\
 (0,{\mathbf g_{(2,1,0,0)}+\mathbf g_{(0,0,1,0)}})&(0,\mathbf{0})& (0, \mathbf{0})\\
\hline
(0,{\mathbf g_{(0,0,0,1)}+ \mathbf g_{(1,0,1,0)}})&(0,\mathbf{0})& (0, \mathbf{0})\\
\hline
(0,{\mathbf g_{(1,0,0,0)}})&(0, {\mathbf g_{(0,0,0,0)}}) & (0, \mathbf{0})\\
(0,{\mathbf g_{(0,0,1,0)}})&(0, {\mathbf g_{(2,0,0,0)}}) & (0, \mathbf{0})\\
(0,{\mathbf g_{(1,0,0,1)}})&(0, {\mathbf g_{(0,1,0,0)}+\mathbf g_{(3,0,0,0)}}) & (0, \mathbf{0})\\
(0,{\mathbf g_{(0,0,1,1)}})&(0,{\mathbf g_{(2,1,0,0)}+\mathbf g_{(0,0,1,0)}}) & (0, \mathbf{0})\\
\hline
(0,\mathbf g_{(1,0,1,0)})&(0,\mathbf g_{(0,0,0,1)}+\mathbf g_{(1,0,1,0)}) & (0, \mathbf{0})\\
\hline
(0,\mathbf g_{(0,1,0,0)})& (0,\mathbf g_{(1,0,0,0)})& (0, \mathbf{0})\\
(0,\mathbf g_{(1,1,0,0)})& (0,\mathbf g_{(0,0,1,0)})& (0, \mathbf{0})\\
(0,\mathbf g_{(0,1,1,0)})& (0,\mathbf g_{(1,0,0,1)})& (0, \mathbf{0})\\
(0,\mathbf g_{(0,1,0,1)})&(0,\mathbf g_{(0,0,1,1)}) & (0, \mathbf{0})\\
\hline
\psi(f_{(0,1,0,1)}) & (0,\mathbf{0})& \left(0,{\mathbf g_{(0,0,0,0)}} \right)\\
\psi(f_{(0,1,1,0)}) & (0,\mathbf{0})&\left(0,  {\mathbf g_{(2,0,0,0)}}\right)\\
\psi(f_{(1,1,0,0)}) &(0,\mathbf{0})& \left(0,  {\mathbf g_{(0,1,0,0)}+\mathbf g_{(3,0,0,0)}}\right)\\
\psi(f_{(0,1,0,0)}+f_{(3,0,0,0)}) &(0,\mathbf{0})& \left(0,{\mathbf g_{(2,1,0,0)}+\mathbf g_{(0,0,1,0)}}\right)\\
\hline
\psi(f_{(0,0,0,1)}+f_{(1,0,1,0)}) & (0,\mathbf{0}) &\left(0,{\mathbf g_{(0,0,0,1)}+\mathbf g_{(1,0,1,0)}}\right)\\
\hline
\psi(f_{(0,0,1,1)})&(0,\mathbf{0}) & \psi(f_{(0,1,0,1)})\\
\psi(f_{(1,0,0,1)}) & (0,\mathbf{0})&\psi(f_{(0,1,1,0)})\\
\psi(f_{(2,1,0,0)}+f_{(0,0,1,0)})&(0,\mathbf{0})&\psi(f_{(1,1,0,0)})\\
\psi(f_{(1,0,0,0)}) &(0,\mathbf{0})& \psi(f_{(0,1,1,0)})\\
\hline
\psi(f_{(1,0,1,0)}) &(0,\mathbf g_{(1,0,1,0)})& \psi(f_{(0,0,0,1)}+f_{(1,0,1,0)})\\
\hline
\psi(f_{(2,1,0,0)})&(0,\mathbf g_{(0,1,0,0)})&\psi(f_{(0,0,1,1)}) \\
\psi(f_{(3,0,0,0)})&(0,\mathbf g_{(1,1,0,0)})&\psi(f_{(1,0,0,1)})\\
\psi(f_{(2,0,0,0)})&(0,\mathbf g_{(0,1,1,0)})&\psi(f_{(2,1,0,0)}+f_{(0,0,1,0)})\\
\psi(f_{(0,0,0,0)})&(0,\mathbf g_{(0,1,0,1)})&\psi(f_{(1,0,0,0)}) \\
\hline

\end{array}\]
\end{table}

To conclude, we use the tables to give an explicit proof of Proposition \ref{m=1}.

\begin{proposition}
When $m=1$, then 
the mod $2$ Dieudonn\'e module of ${\mathcal S}_1$ is
\[D_1 \simeq \EE/\EE(F^2+V^2) \oplus (\EE/\EE(F^3+V^3))^4.\]
\end{proposition}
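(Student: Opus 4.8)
The plan is to read off the entire $\EE$-module structure of $D_1$ directly from Table~\ref{Vtable}, which records the action of $F$ and $V$ on the explicit basis $\psi(A)\cup\lambda(B)$ of $H^1_{\rm dR}({\mathcal S}_1)$. Since the mod $2$ Dieudonn\'e module $D_1$ is isomorphic as an $\EE$-module to $H^1_{\rm dR}({\mathcal S}_1)$, it suffices to decompose the $28$-dimensional space described by the table into a direct sum of cyclic $\EE$-modules and identify each summand with one of the stated modules $\EE/\EE(F^2+V^2)$ or $\EE/\EE(F^3+V^3)$.

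First I would locate the generators of each cyclic summand. The natural candidates are the basis vectors $\psi(f)$ on which neither $F$ nor $V$ acts as zero in a way that places them ``downstream''; concretely, I would start from the four vectors $\psi(f_{(0,0,0,0)})$, $\psi(f_{(2,0,0,0)})$, $\psi(f_{(3,0,0,0)})$, $\psi(f_{(2,1,0,0)})$ and the vector $\psi(f_{(1,0,1,0)})$, and trace the orbit of each under repeated application of $F$ and $V$ as dictated by the table. For the first four, applying $F$ three times and $V$ three times should each carry the generator through a chain of six distinct basis vectors (three in the image of $\psi$, three in the image of $\lambda$) and land on the relation $F^3 = V^3$ on the generator, yielding a copy of $\EE/\EE(F^3+V^3)$ of dimension $6$. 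For $\psi(f_{(1,0,1,0)})$, the shorter chain given by the self-referential entries involving ${\mathbf g}_{(1,0,1,0)}$ and ${\mathbf g}_{(0,0,0,1)}+{\mathbf g}_{(1,0,1,0)}$ should produce the relation $F^2 = V^2$, giving a copy of $\EE/\EE(F^2+V^2)$ of dimension $4$.

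Next I would verify that these five chains are disjoint and exhaust the basis: $4 \cdot 6 + 1 \cdot 4 = 28 = 2g_1$, which matches the dimension of $H^1_{\rm dR}({\mathcal S}_1)$, so a successful bookkeeping both confirms the direct-sum decomposition and forces it to be complete. I would check that each generator is genuinely cyclic — that its $\EE$-orbit has the predicted dimension and that $F$ and $V$ kill it only in the combination $F^{t}+V^{t}$ claimed — by confirming the entries ${\mathbf g}$ appearing in the chains are precisely the $\lambda$-images listed in the table and that no vector appears in two chains. Finally, I would confirm the relations are exactly the symmetric ones $F^t + V^t$ (and not $F^t - V^t$ or some twist) using that we are in characteristic $2$, where the sign is irrelevant, and that each summand is symmetric as required for a principally polarized Jacobian.

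The main obstacle is the careful combinatorial bookkeeping: the table lists $F$ and $V$ on a mixture of $\lambda$-images and $\psi$-images, and I must correctly thread each generator through its full orbit, tracking when a chain transitions from the $\psi$-part (where $V$ acts trivially until the bottom) into the $\lambda$-part (where $F$ acts as $0$), and confirm that the two ``ends'' of each orbit close up under the single relation $F^t + V^t$. The potential pitfall is mis-assigning a basis vector to the wrong chain or miscounting the length of an orbit, so the dimension count $28 = 24 + 4$ serves as the essential consistency check that the decomposition is both valid and exhaustive.
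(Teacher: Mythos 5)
Your proposal is correct and follows essentially the same route as the paper: both read the $\EE$-module structure directly off Table~\ref{Vtable}, using exactly the same five cyclic generators ($\psi(f_{(1,0,1,0)})$ for the rank-$4$ summand $\EE/\EE(F^2+V^2)$, and $\psi(f_{(2,1,0,0)})$, $\psi(f_{(2,0,0,0)})$, $\psi(f_{(3,0,0,0)})$, $\psi(f_{(0,0,0,0)})$ for the four rank-$6$ summands $\EE/\EE(F^3+V^3)$). Your added bookkeeping — tracing each orbit, checking disjointness, and the dimension count $28 = 4\cdot 6 + 4$ — is exactly the verification the paper leaves implicit.
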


\begin{proof}
As an $\EE$-module, $D_1$ is isomorphic to $H^1_{\rm dR}({\mathcal S}_1)$.  
From Table~\ref{Vtable}, $H^1_{\rm dR}({\mathcal S}_1)$ has a summand of rank 4 generated by 
$X_1=\psi(f_{(1,0,1,0)})$ with relation $(F^2+V^2)X_1=0$.  There are 4 summands of rank 6 generated by 
$X_2=\psi(f_{(2,1,0,0)})$, $X_3=\psi(f_{(2,0,0,0)})$, $X_4=\psi(f_{(3,0,0,0)})$, and $X_5=\psi(f_{(0,0,0,0)})$ with the relations $(F^3+V^3)X_i=0$.  
This yields the $\mathbb{E}$-module structure $\mathbb{E}/\EE(F^2+V^2)\oplus \left(\mathbb{E}/\EE(F^3+V^3)\right)^4$. 
\end{proof}

Note that the trivial eigenspace $D_{1,0}$ appears as the summand $\mathbb{E}/(F^2+V^2)$.  
It is spanned by \[\{\psi(f_{(1,0,1,0)}), \psi(f_{(0,0,0,1)}+f_{(1,0,1,0)}), (0,\mathbf g_{(1,0,1,0)}), (0,\mathbf g_{(0,0,0,1)}+\mathbf g_{(1,0,1,0)})\}.\]

\bibliographystyle{amsplain}
\providecommand{\bysame}{\leavevmode\hbox to3em{\hrulefill}\thinspace}
\providecommand{\MR}{\relax\ifhmode\unskip\space\fi MR }
\providecommand{\MRhref}[2]{%
  \href{http://www.ams.org/mathscinet-getitem?mr=#1}{#2}
}
\providecommand{\href}[2]{#2}

\end{document}